\theoremstyle{plain}
\newtheorem{prop}{Proposition}[section]
\newtheorem{thm}[prop]{Theorem}
\newtheorem{cor}[prop]{Corollary}
\newtheorem{conj}[prop]{Conjecture}
\theoremstyle{definition}
\newtheorem{lab}[prop]{}
\theoremstyle{remark}
\newtheorem{rem}[prop]{Remark}
\newtheorem{rems}[prop]{Remarks}
\renewcommand{\subset}{\subseteq}
\newcommand{\C}{{\mathbb{C}}}
\renewcommand{\P}{{\mathbb{P}}}
\newcommand{\R}{{\mathbb{R}}}
\newcommand{\x}{{\mathtt{x}}}
\newcommand{\sfS}{\mathsf{S}}
\DeclareMathOperator{\ch}{char}
\DeclareMathOperator{\Hom}{Hom}
\DeclareMathOperator{\im}{im}
\DeclareMathOperator{\interior}{int}
\newcommand{\du}{{\scriptscriptstyle\vee}}
\newcommand{\ol}{\overline}
\renewcommand{\setminus}{\smallsetminus}
\renewcommand{\epsilon}{\varepsilon}
\renewcommand{\theta}{\vartheta}
\newcommand{\ex}{\exists\,}
\renewcommand{\choose}[2]{\genfrac(){0pt}{}{#1}{#2}}
\begin{document}

\title
[Sum of squares length of real forms]
{Sum of squares length of real forms}

\author
{Claus Scheiderer}

\address
 {Fachbereich Mathematik und Statistik \\
 Universit\"at Konstanz \\
 78457 Konstanz \\
 Germany}
\email
 {claus.scheiderer@uni-konstanz.de}

\begin{abstract}
For $n,\,d\ge1$ let $p(n,2d)$ denote the smallest number $p$ such
that every sum of squares of forms of degree $d$ in
$\R[x_1,\dots,x_n]$ is a sum of $p$ squares. We establish lower
bounds for these numbers that are considerably stronger than the
bounds known so far. Combined with known upper bounds they give
$p(3,2d)\in\{d+1,\,d+2\}$ in the ternary case. Assuming a conjecture
of Iarrobino-Kanev on dimensions of tangent spaces to catalecticant
varieties, we show that $p(n,2d)\sim const\cdot d^{(n-1)/2}$ for
$d\to\infty$ and all $n\ge3$.
For ternary sextics and quaternary quartics we determine the exact
value of the invariant, showing $p(3,6)=4$ and $p(4,4)=5$.
\end{abstract}

\maketitle


\section*{Introduction}

Given a polynomial $f(x_1,\dots,x_n)$ with real coefficients that
has nonnegative values, Artin proved that $f$ can be written as a
sum of squares of rational functions over $\R$. By a famous theorem
of Pfister \cite{pf}, $2^n$ squares are always sufficient to
represent $f$. For $n\le2$ this bound is known to be best possible,
whereas for $n\ge3$ it is only known that the best general bound
lies between $n+2$ and $2^n$ (see \cite{s}, for instance).

In general it is not possible to represent $f$ as a sum of squares of
polynomials, as Hilbert \cite{hi} proved in 1888. However if we
restrict attention to sums of squares of polynomials, it is natural
to ask for an upper bound on the number of squares needed.
Switching to homogeneous polynomials (forms), we therefore fix
$n,\,d\ge1$ and ask for the smallest number $p=p(n,2d)$ such that
every form of degree $2d$ in $\R[x_1,\dots,x_n]$ that is a sum of
squares of forms is a sum of $p$ such squares. For $n\le2$ or $d=1$
this is an elementary question, but otherwise the only case where
$p(n,2d)$ is known exactly is $(n,2d)=(3,4)$, where $p(3,4)=3$
according to Hilbert \cite{hi}.

Still quite a bit is known for other combinations $(n,2d)$. Choi, Lam
and Reznick \cite{clr} established both upper and lower bounds for
$p(n,2d)$. Fixing the number $n$ of variables, they proved in
particular that $p(n,2d)=O(d^{(n-1)/2})$ for $d\to\infty$.
While the upper bounds from \cite{clr} are still largely the best
ones known when $n\ge4$, there exists a substantial improvement for
$n=3$, due to Leep \cite{lp}. Using Cassels-Pfister theory over a
univariate polynomial ring, he improved the upper bound for $p(3,2d)$
from $2d+1$ to $d+2$. On the other hand, there is a huge gap between
known upper and lower bounds. Fixing $n$ and letting $d$ grow, the
asymptotically best lower bounds known for $p(n,2d)$ are only
logarithmic in $d$~\cite{cdlr}. See Section~\ref{secreview} below for
precise details.

The first main result of this paper establishes new lower bounds that
are much closer to the existing upper bounds. In fact they have the
same growth rate for $d\to\infty$, showing $p(n,2d)\sim d^{(n-1)/2}$
for $d\to\infty$. For ternary forms, they combine with Leep's theorem
to show that $p(3,2d)$ is either $d+1$ or $d+2$. The idea is to
consider forms $f$ that are sums of squares and vanish in a generic
finite set $Z\subset\P^{n-1}(\R)$ of appropriate size. For such $f$
we show that the sum of squares representation of $f$ is essentially
unique. The explicit form of the resulting bound depends
on the assumption that the Hilbert series of the squared vanishing
ideal $I(Z)^2$ is the expected one. For $n=3$ this has been proved,
but for $n\ge4$ it is merely a conjecture due to Iarrobino-Kanev.
Although the conjecture has been verified for small values of $n$ and
$d$, our bounds for $n\ge4$ depend therefore in general on this
conjecture.

The second main result computes the invariants $p(3,6)$ and $p(4,4)$
exactly, showing $p(3,6)=4$ and $p(4,4)=5$. Note that $(3,6)$ and
$(4,4)$ are precisely the two minimal cases where the sums of squares
cone is strictly smaller than the cone of nonnegative forms.

The paper is organized as follows. Section~\ref{secreview} recalls
the upper and lower bounds for $p(n,2d)$ that can be found in the
literature. Section~\ref{sectyp} studies the collection of typical
sums of squares lengths of forms in $\Sigma_{n,2d}$, i.e., lengths
that occur for an open nonempty set of forms. We show that every
integer between the typical complex length (essentially known) and
the maximum length $p(n,2d)$ is a typical length in this sense.
Section~\ref{sectlowerbd} establishes the new lower bound for
$p(n,2d)$ (depending on the Iarrobino-Kanev conjecture for $n\ge4$)
and discusses the asymptotics of $p(n,2d)$. In Section~\ref{sec3644}
we show that in the cases $(3,6)$ and $(4,4)$ the existing lower
bound for $p(n,2d)$ is sharp.

We remark that the results of this paper remain true over any real
closed field, instead of $\R$.


\section{Review of known upper and lower bounds}\label{secreview}%

\begin{lab}
We start with setting up basic notation.
Let $n\ge1$, $e\ge0$, and write $\x=(x_1,\dots,x_n)$. By $\R[\x]_e$
we denote the space of degree~$e$ forms (homogeneous polynomials) in
$\R[\x]$. We often abbreviate
$$N_{n,e}\>:=\>\dim\R[\x]_e\>=\>\choose{n+e-1}{n-1}.$$
If $e=2d$ is even then $\Sigma_{n,2d}$ denotes the subset
of $\R[\x]_{2d}$ of all forms that are sums of squares of forms of
degree~$d$. It is well known that $\Sigma_{n,2d}$ is a closed convex
cone in $\R[\x]_{2d}$ with nonempty interior.
Given $f\in\Sigma_{n,2d}$, we denote by
$$\ell(f)\>=\>\min\bigl\{r\ge0\colon\ex p_1,\dots,p_r\in\R[\x]_d
\text{ with }f=p_1^2+\cdots+p_r^2\bigr\}$$
the \emph{sum of squares length} (or \emph{sos length}) of $f$. We
call
$$p(n,2d)\>:=\>\sup\bigl\{\ell(f)\colon f\in\Sigma_{n,2d}\bigr\}$$
the \emph{Pythagoras number} of $n$-ary forms of degree $2d$.
\end{lab}

\begin{lab}
It is elementary to see $p(1,2d)=1$, $p(2,2d)=2$ ($d\ge1$) and
$p(n,2)=n$ ($n\ge1$). Clearly, $n\le m$ and $d\le e$ imply $p(n,2d)
\le p(m,2e)$.
Hilbert \cite{hi} proved $p(3,4)=3$. So far, these are the only cases
where the precise value of $p(n,2d)$ is known.
\end{lab}

\begin{lab}\label{cdlr}%
For $n\ge2$, Choi, Dai, Lam and Reznick \cite{cdlr} proved in 1982
that the ring $\R[x_1,\dots,x_n]$ has Pythagoras number $\infty$.
This amounts to $p(n,2d)\to\infty$ for $n\ge3$ and $d\to\infty$. From
the proof of \cite{cdlr} Theorem 4.10 one can extract explicit lower
bounds for $p(n,2d)$ when $n\ge3$. In particular, it is shown there
that $p(n,4d+2)>p(n,2d)$, resulting in lower bounds for $p(n,2d)$
that are logarithmic in $d$.
\end{lab}

A systematic study of the invariants $p(n,2d)$ was initiated by Choi,
Lam and Reznick \cite{clr} in 1995.
Adopting notation from \cite{clr}, the following is essentially the
main result:

\begin{thm}\label{clrbounds}%
Let $n,\,d\ge1$. The Pythagoras number $p:=p(n,2d)$ satisfies the
inequalities
\begin{equation}\label{genbds}%
\choose{p+1}2\>\le\>N_{n,2d}\>\le\>pN_{n,d}-\choose p2.
\end{equation}
These translate into
\begin{equation}\label{genbds2}%
\lambda(n,2d)\>\le\>p(n,2d)\>\le\>\Lambda(n,2d)
\end{equation}
where (writing $a=N_{n,2d}$ and $e=N_{n,d}$)
\begin{equation}\label{lambdadefs}%
\lambda(n,2d)\>:=\>\frac12\Bigl(2e+1-\sqrt{(2e+1)^2-8a}\Bigr),
\quad\Lambda(n,2d)\>:=\>\frac12\Bigl(-1+\sqrt{1+8a}\Bigr).
\end{equation}
\end{thm}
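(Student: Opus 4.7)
The plan is to establish the two inequalities in \eqref{genbds} by independent arguments and then translate them into \eqref{genbds2} by solving quadratic inequalities in $p$.

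For the lower bound $\choose{p+1}{2}\le N_{n,2d}$, I would fix $f\in\Sigma_{n,2d}$ with $\ell(f)=p$ together with a minimal representation $f=p_1^2+\cdots+p_p^2$, and aim to show that the $\choose{p+1}{2}$ products $p_ip_j$ ($1\le i\le j\le p$) are linearly independent in $\R[\x]_{2d}$. Suppose otherwise: then a nonzero real symmetric $p\times p$ matrix $C=(C_{ij})$ satisfies $\sum_{i,j}C_{ij}p_ip_j=0$. Writing $\mathbf{p}=(p_1,\dots,p_p)^T$, this reads $\mathbf{p}^TC\mathbf{p}=0$, so $f=\mathbf{p}^T\mathbf{p}=\mathbf{p}^T(I-tC)\mathbf{p}$ for every $t\in\R$. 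The set $\{t\in\R:I-tC\succeq0\}$ is a closed interval containing $0$; since $C$ has a nonzero eigenvalue it is bounded on at least one side, so it has an endpoint $t_0\ne0$ at which $I-t_0C$ is positive semidefinite and singular. A real factorization $I-t_0C=A^TA$ with $A$ of rank strictly less than $p$ then expresses $f=\|A\mathbf{p}\|^2$ as a sum of fewer than $p$ squares, contradicting minimality.

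For the upper bound $N_{n,2d}\le pN_{n,d}-\choose{p}{2}$, I would apply a dimension count to the polynomial map
$$\sigma_p\colon\R[\x]_d^{\,p}\longrightarrow\R[\x]_{2d},\qquad(p_1,\dots,p_p)\longmapsto p_1^2+\cdots+p_p^2.$$
By the definition of $p=p(n,2d)$ the image of $\sigma_p$ is all of $\Sigma_{n,2d}$, and since this cone has nonempty interior in $\R[\x]_{2d}$ its real dimension equals $N_{n,2d}$. The orthogonal group $O(p)$, of dimension $\choose{p}{2}$, acts on the domain by $Q\cdot(p_1,\dots,p_p)=(p_1,\dots,p_p)Q^T$, preserves each fiber of $\sigma_p$, and has trivial stabilizer at any tuple of linearly independent forms---which are generic, since linearly dependent tuples produce sums of at most $p-1$ squares. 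Each such generic fiber therefore contains an $O(p)$-orbit of dimension $\choose{p}{2}$, and Sard's theorem applied at a regular value yields $pN_{n,d}-N_{n,2d}\ge\choose{p}{2}$.

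Setting $a=N_{n,2d}$ and $e=N_{n,d}$, the inequalities of \eqref{genbds} rearrange to $p^2+p-2a\le0$ and $p^2-(2e+1)p+2a\le0$; the first forces $p\le\Lambda(n,2d)$ while the second forces $p$ to lie between the two real roots of the quadratic, hence in particular $p\ge\lambda(n,2d)$. I expect the technical heart of the proof to be the lower bound---specifically the existence of the singular positive semidefinite matrix $I-t_0C$ and the accompanying real factorization that ``shrinks'' the number of squares---once this reduction step is in hand, the rest is routine dimension counting and quadratic algebra.
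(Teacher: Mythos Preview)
Your proposal is correct and follows essentially the same approach as the paper's sketch: for the first inequality you prove (with the standard Gram-matrix deformation argument) that the products $p_ip_j$ in a minimal representation are linearly independent, which is exactly the fact the paper invokes; for the second you use Sard's theorem together with the $O(p)$-action on the fibers of the sum-of-squares map, which is a slightly more explicit phrasing of the paper's observation that this map is generically submersive. The only cosmetic difference is that you argue via fiber dimension rather than rank of the differential, but these are equivalent.
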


\begin{proof}
(Sketch)
The first inequality in \eqref{genbds} comes from the fact that when
$f=\sum_{i=1}^rf_i^2$ is a sum of squares representation of $f$ of
minimal length, the products $f_if_j$ are linearly independent.
The second follows from the fact that the sum of squares map
$(\R[\x]_d)^p\to\R[\x]_{2d}$ is submersive generically on the source,
according to Sard's theorem.
\end{proof}

\begin{lab}
The setup of \cite{clr} is in fact more general than in Theorem
\ref{clrbounds}. The authors consider sums of squares $f$ whose
Newton polytope $\text{New}(f)$ is contained in a fixed convex set
(``cage'') $C$. Inequalities \eqref{genbds2} are generalized (with
essentially the same proof) to
$$\lambda(C)\>\le\>p(C)\>\le\>\Lambda(C)$$
for any cage $C$, where $p(C)=\max\{\ell(f)\colon f\in\Sigma$,
$\text{New}(f)\subset C\}$, and $\lambda(C)$, $\Lambda(C)$ are
defined as in \eqref{lambdadefs}, with $e$ resp.\ $a$ (related to)
the numbers of lattice points in $\frac12C$ and $C$.
See \cite{clr} for precise definitions.
\end{lab}

\begin{lab}\label{remlowbds}%
Particular choices of $C$ may lead to improved lower bounds for
$p(n,2d)$. Namely, let $\ol\lambda(n,2d)=\max_C\lambda(C)$, maximum
over the Newton polytopes $C$ of forms in $\R[x_1,\dots,x_n]_{2d}$.
Then $\ol\lambda(n,2d)\le p(n,2d)$, and it may happen that
$\lceil\lambda(n,2d)\rceil<\lceil\ol\lambda(n,2d)\rceil$. This
phenomenon was already discussed in \cite{clr},
and was later studied in detail by Leep and Starr \cite{ls}.

Unfortunately there is a huge discrepancy between lower and upper
bounds. The reason is that the lower bounds $\lambda$ are quite weak
in general, even in the improved version $\ol\lambda$.
Indeed, it is easy to see that $d\mapsto\lambda(n,2d)$ is growing
with limit $2^{n-1}$ for $d\to\infty$,
and it is expected (\cite{ls} Conjecture 4.3) that the
same holds for $\ol\lambda$. For $n=3$ is has been shown that
$\ol\lambda(n,2d)=\lceil\lambda(n,2d)\rceil=4$ for all $d\ge3$
(\cite{ls} Theorem 4.3).
Therefore, when $d$ is large relative to
$n$, then $\lambda$ (and probably $\ol\lambda$ as well) only gives
the lower bound $2^{n-1}$ for $p(n,2d)$, which does not even depend
on $d$. This strongly contrasts with the fact that $p(n,2d)\to\infty$
for $d\to\infty$, when $n\ge3$. See also the discussion at the end of
\cite{ls}. Altogether, when one considers the general case ($d$ large
relative to $n\ge3$), it seems that the strongest known lower bounds
for $p(n,2d)$ are still the logarithmic bounds derived from
\cite{cdlr}, see \ref{cdlr} above.
\end{lab}

\begin{lab}\label{gnrappl}%
We now discuss a substantial improvement for the upper bound in the
case of ternary forms, due to Leep \cite{lp}. To describe his result,
let $A$ be a ring containing $\frac12$. For $r\ge1$ let $g_r(A)$
denote the smallest number $g\ge1$ such that, for any finite number
$l_1,\dots,l_N$ of linear forms in $A[x_1,\dots,x_r]$, there exist
$g$ other linear forms $l'_1,\dots,l'_g\in A[x_1,\dots,x_r]$ with
$l_1^2+\cdots+l_N^2=l_1'^2+\cdots+l_g'^2$.
If there is no finite such bound we write $g_r(A)=\infty$.

For example, when $R$ is a real closed field and $t$ is a variable,
$g_r(R[t])=r+1$ for every $r\ge1$ (\cite{blop} Example~iii).
\end{lab}

\begin{lab}\label{remgr}%
Upper bounds on $g_r$ can lead to upper bounds for sos lengths, by
the following elementary observation (compare \cite{cdlr} Theorem
2.7).
Let $B$ be an $A$-algebra, and let $M$ be an $A$-submodule of $B$
generated by $r<\infty$ elements. Let $g=g_r(A)$. If $b\in B$ is a
sum of squares of elements of $M$, then there exist $b_1,\dots,b_g\in
M$ with $b=b_1^2+\cdots+b_g^2$.
\end{lab}

Leep proves (\cite{lp} Theorem 5.2):

\begin{thm}\label{leepthm}%
(Leep)
If $k$ is any real field then $g_r(k[t])=g_r(k(t))$ for all $r\ge1$.
\end{thm}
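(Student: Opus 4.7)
The inequality $g_r(k(t)) \le g_r(k[t])$ is routine: given linear forms $L_1,\dots,L_N \in k(t)[x_1,\dots,x_r]$, choose a common denominator $0 \ne f \in k[t]$ so that $fL_i \in k[t][x]$; applying the definition of $g_r(k[t])$ to $\sum_i (fL_i)^2 = f^2 \sum_i L_i^2$ and dividing back by $f^2$ gives the bound. The content of the theorem therefore lies in the reverse inequality $g_r(k[t]) \le g_r(k(t))$; set $g := g_r(k(t))$. Starting from $L_1,\dots,L_N \in k[t][x]$ and a $k(t)$-representation $\sum_i L_i^2 = \sum_{j=1}^g M_j^2$ with $M_j \in k(t)[x]$ linear in $x$, one must arrange for the $M_j$ to lie in $k[t][x]$.

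Encoding each linear form $\sum_a c_a x_a$ by its coefficient vector turns this into a matrix problem. Writing $C \in M_r(k[t])$ for the symmetric Gram matrix of $F := \sum_i L_i^2$, one has $C = A^T A$ with $A \in M_{N \times r}(k[t])$ (coming from the $L_i$), and $C = B^T B$ with $B \in M_{g \times r}(k(t))$ (coming from the $M_j$); the goal is to produce some $B' \in M_{g \times r}(k[t])$ satisfying $(B')^T B' = C$. In the language of quadratic forms, the $r$-ary form $Q(x) = x^T C x$ over $k[t]$ is represented by the $g$-ary unit form $\varphi := \langle 1, \dots, 1 \rangle$ after base change to $k(t)$, and one must descend this representation to $k[t]$.

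My plan is to carry out a Cassels--Pfister style descent on the common denominator of $B$. Among all factorizations $C = B^T B$ with $B \in M_{g \times r}(k(t))$, select one whose entries share a common denominator $d \in k[t]$ of minimal $t$-degree, and assume for contradiction that $\deg d > 0$. Clearing denominators, $B_0 := dB \in M_{g \times r}(k[t])$ satisfies $B_0^T B_0 = d^2 C$ and is not divisible by $d$. Column-by-column polynomial division of $B_0$ by $d$ yields a nonzero remainder matrix $R \in M_{g \times r}(k[t])$ whose entries have $t$-degree less than $\deg d$, and the identity $B_0^T B_0 = d^2 C$ forces a strong quadratic congruence on $R$ modulo $d$. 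Applying the classical Cassels reflection -- the orthogonal reflection of $k(t)^g$ through a suitably chosen column $r$ of $R$ -- to $B$ should then produce a new $k(t)$-factorization $C = (B')^T B'$ whose entries admit a common denominator of strictly smaller degree than $d$, contradicting the minimality of $\deg d$.

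I expect the main technical obstacle to be this reflection step, and specifically the task of choosing a column $r$ of $R$ with $\varphi(r) \ne 0$, so that the reflection is well-defined and strictly lowers the denominator. This is exactly where the hypothesis that $k$ is a real field enters: because $k$ (and hence $k(t)$) is formally real, $\varphi(r) = \sum_j r_j^2 \in k[t]$ vanishes only when every $r_j$ does, so every nonzero column of $R$ is $\varphi$-anisotropic; and the minimality of $\deg d$ guarantees that $R$ itself is nonzero, hence has a nonzero column. A direct degree computation then shows the reflected $B'$ has a strictly smaller common denominator. Iterating the descent finitely often forces $d \in k^\times$, at which point $B$ itself lies in $M_{g \times r}(k[t])$ and yields the desired $k[t]$-representation $F = \sum_{j=1}^g M_j^2$ with $M_j \in k[t][x]$.
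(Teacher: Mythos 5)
The paper does not reproduce Leep's argument at all: it simply cites Theorem~5.2 of the preprint \cite{lp} and moves on, so there is no internal proof to compare you against. Your overall strategy --- reduce to a matrix factorization problem, clear denominators to get the easy inequality $g_r(k(t))\le g_r(k[t])$, and attack the hard inequality by a Cassels--Pfister descent on the degree of the common denominator, using formal reality of $k(t)$ to keep the unit form anisotropic --- is the natural one, and the easy direction is handled correctly.

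The gap is in the descent step itself, and it is not the one you flag. You correctly observe that reality of $k$ guarantees $\varphi(\rho)\ne 0$ for any nonzero column $\rho$ of the remainder matrix $R$, so the reflection $\sigma_\rho$ is well-defined. But you then assert, without verification, that applying $\sigma_\rho$ to \emph{every} column of $B$ yields a factorization whose common denominator has strictly smaller degree. This is exactly where the matrix version diverges from the classical scalar Cassels--Pfister argument. Write $B_0=dQ+R$ with columns $q_a$ of $Q$ and $r_a$ of $R$, pick $\rho:=r_{a_0}\ne 0$, and note that $B_0^TB_0=d^2C$ forces $R^TR\equiv 0\pmod d$, hence $\varphi(\rho)=de$ with $\deg e<\deg d$ and $\rho^Tb_a\in k[t]$ for every $a$. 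One then computes
\[
\sigma_\rho(b_a)\;=\;\frac{e\,(B_0)_a\;-\;2\bigl(\rho^Tb_a\bigr)\rho}{de}.
\]
For $a=a_0$ the numerator is divisible by $d$ (this is the classical scalar computation, using $2q_{a_0}^T\rho+e\equiv 0\bmod d$), so that single column's denominator drops to a divisor of $e$. But for $a\ne a_0$ the numerator is $\equiv e\,r_a+2\bigl(q_{a_0}^Tr_a\bigr)\rho\pmod d$, and the identities available --- $R^TR\equiv 0$ and $Q^TR+R^TQ+M\equiv 0$ modulo $d$ --- only constrain the inner product of this vector with $\rho$, not the vector itself. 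There is no reason for $d$ to divide it, so the common denominator of $\sigma_\rho(B)$ can rise to degree $\deg d+\deg e$ rather than fall, and the induction stalls. A correct proof has to choose the reflection vector far more carefully (or perform a coordinated sequence of reflections controlling all columns at once, or invoke a genuine lattice-theoretic strengthening of Cassels--Pfister for the $k[t]$-module $(k[t]^r,C)$ inside $(k[t]^g,\langle 1,\dots,1\rangle)$). As written, the crucial descent step is unproved.
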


\begin{cor}\label{leepcor}%
When $R$ is a real closed field then $g_r(R[t])=r+1$ for any $r\ge1$.
\qed
\end{cor}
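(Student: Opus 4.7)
The plan is to combine Theorem \ref{leepthm} with the classical computation of $g_r$ over the rational function field $R(t)$. Applying Leep's theorem with $k = R$ (a real closed field is in particular real) yields
\[
g_r(R[t]) \;=\; g_r(R(t)),
\]
reducing the problem to computing $g_r(R(t))$.

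I would then establish $g_r(R(t)) = r+1$ by two separate arguments. For the lower bound, I would exhibit the positive semidefinite form
\[
q \;=\; x_1^2 + \cdots + x_{r-1}^2 + (1+t^2)\,x_r^2.
\]
Any representation $q = l_1^2 + \cdots + l_r^2$ as a sum of $r$ squares of linear forms yields a Gram factorization $Q = L^T L$, where $L$ is the $r \times r$ coefficient matrix of the $l_i$. Taking determinants forces $1+t^2 = \det Q = (\det L)^2$ to be a square in $R(t)$, contradicting the fact that $1+t^2$ is not a square in $R(t)$. For the upper bound, I would invoke the classical quadratic-form theory over the function field of a curve over a real closed field: since $R(t)$ has Pythagoras number $2$, every positive semidefinite form of dimension $r$ over $R(t)$ is a sum of $r+1$ squares of linear forms, which is essentially the field-theoretic content of \cite{blop} (cf.\ \ref{gnrappl}). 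Concretely, after a diagonalization $q\simeq\langle a_1,\dots,a_r\rangle$ with each $a_i$ totally positive, one uses multiplicativity of the Pfister form $\langle 1,1\rangle$ (encoded in the identity $(u^2+v^2)y^2=(uy)^2+(vy)^2$) together with $p(R(t))\le 2$ to repackage the $2r$ naive squares into $r+1$.

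The substantive content is the upper bound $g_r(R(t)) \le r+1$, which rests on the Pfister-type theory of quadratic forms over $R(t)$ and is the one step that cannot be handled by an elementary determinant argument. The new ingredient making this statement a formal corollary of Leep's theorem is the transfer from the polynomial ring to its quotient field: once $g_r(R[t])=g_r(R(t))$ is in hand, the polynomial-ring statement is just the field-theoretic statement in disguise.
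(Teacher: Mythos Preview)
Your approach is exactly the one the paper intends: the corollary carries only a \qed, and the implied argument is precisely ``apply Theorem~\ref{leepthm} with $k=R$ and invoke the known value $g_r(R(t))=r+1$ from \cite{blop}'' (cf.\ \ref{gnrappl}). Your determinant argument for the lower bound is clean and correct; for the upper bound you rightly defer to \cite{blop}, though your informal ``repackage $2r$ squares into $r+1$ via multiplicativity of $\langle 1,1\rangle$'' is a bit loose as stated---the actual mechanism is that a totally positive form of rank $m$ over $R(t)$ embeds as a subform of $(m{+}1)\langle 1\rangle$, which uses more than just multiplicativity---but since you cite the reference this is not a gap.
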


Corollary \ref{leepcor} remains true when the field $R$ is merely
hereditarily pythagorean, see \cite{lp} Theorem 6.3 and thereafter.

\begin{lab}
When we speak of zeros of a form $f(x_1,\dots,x_n)$ we mean zeros in
complex projective space $\P^{n-1}(\C)$. Recall that the multiplicity
of a form $0\ne f\in\R[x_1,\dots,x_n]$ at a point $\xi\in\P^{n-1}
(\C)$ is the minimal number $m\ge0$ for which there exists an
$n$-tuple $a=(a_1,\dots,a_n)$ of nonnegative integers with
$\sum_{i=1}^na_i=m$ and with $(\partial^af)(\xi)=\partial_1^{a_1}
\cdots\partial_n^{a_n}f(\xi)\ne0$. When $f$ has nonnegative values,
the multiplicity of $f$ at any point in $\P^{n-1}(\R)$ is even. The
form $f$ is singular if and only if it has multiplicity $\ge2$ at
some $\xi\in\P^{n-1}(\C)$.
\end{lab}

Applying Corollary \ref{leepcor} we get:

\begin{thm}\label{leepboundallg}%
Let $n\ge2$ and $d\ge1$, let $f\in\Sigma_{n,2d}$, and assume that $f$
has a real zero of multiplicity $2m\ge0$. Then
$$\ell(f)\>\le\>1+\choose{n+d-2}{n-2}-\choose{n+m-3}{n-2}.$$
In particular, $m=0$ gives
\begin{equation}\label{leepbdallg}
p(n,2d)\>\le\>1+\choose{n-2+d}{n-2}
\end{equation}
for all $d\ge1$.
\end{thm}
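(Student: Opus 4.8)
The plan is to reduce the sum of squares representation of $f$ to a problem about sums of squares of linear forms over a univariate polynomial ring, so that Corollary~\ref{leepcor} applies. First I would choose coordinates so that the real zero $\xi$ of multiplicity $2m$ is the point $[0:\cdots:0:1]\in\P^{n-1}(\R)$, and dehomogenize by setting $x_n=1$; write $t$ for the remaining affine coordinate is not quite right since $n-1$ affine variables remain, so instead I would localize along the line through $\xi$. Concretely, I would regard forms of degree $d$ as elements of $R[x_1,\dots,x_{n-1}][x_n]$, i.e.\ as polynomials in $x_n$ with coefficients that are forms in $x_1,\dots,x_{n-1}$, and pass to the ring $A=R[x_1,\dots,x_{n-1}]$, which after a generic linear change contains a copy of $R[t]$ with $R$ real closed. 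The key point is that, because $f$ has multiplicity $2m$ at $\xi=[0:\dots:0:1]$, every form $p_i$ in a representation $f=\sum p_i^2$ must vanish to order $\ge m$ at $\xi$, hence lies in the ideal generated by $x_1,\dots,x_{n-1}$ raised to the $m$-th power when expanded around that point.

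Next I would count: the $R$-submodule $M$ of $R[x_1,\dots,x_n]_d$ consisting of degree-$d$ forms that vanish to order $\ge m$ at $\xi$ is spanned by monomials $x^{a}$ with $a_1+\cdots+a_{n-1}\ge m$; equivalently, writing a degree-$d$ form as $\sum_{j=0}^d c_j(x_1,\dots,x_{n-1})x_n^{d-j}$ with $c_j$ of degree $j$, the vanishing condition forces $c_0=c_1=\cdots=c_{m-1}=0$, so $M$ is a free module of rank $\sum_{j=m}^d\dim R[x_1,\dots,x_{n-1}]_j$. Over the base ring $R[x_1,\dots,x_{n-1}]$ (more precisely, after specializing the $n-2$ extra variables generically to make it $R[t]$, using Theorem~\ref{leepthm} and Corollary~\ref{leepcor} to keep the bound $r+1$ honest), this module is generated by $r:=\dim R[x_1,\dots,x_{n-1}]_d-\dim R[x_1,\dots,x_{n-1}]_{m-1}=\choose{n+d-2}{n-2}-\choose{n+m-3}{n-2}$ elements, namely the monomials in $x_1,\dots,x_{n-1}$ of degree between $m$ and $d$, viewed modulo the relation that multiplication by $x_n$ shifts degree. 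Then I invoke \ref{remgr} with $A=R[t]$, $g=g_r(A)=r+1$ by Corollary~\ref{leepcor}, and $B$ the relevant polynomial ring, to conclude $\ell(f)\le r+1=1+\choose{n+d-2}{n-2}-\choose{n+m-3}{n-2}$; the case $m=0$ gives \eqref{leepbdallg} with the empty term $\choose{n-3}{n-2}=0$.

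The main obstacle is making the module count genuinely correct: one must verify that the forms $p_i$ really do all vanish to order $\ge m$ at $\xi$ (this follows because $f=\sum p_i^2$ has even multiplicity $2m$ there and a sum of squares can only drop in multiplicity if some summand does, by looking at the lowest-order terms), and that the reduction to a \emph{univariate} polynomial ring does not lose information — i.e.\ that a sum of squares of degree-$d$ forms vanishing to order $m$ at $\xi$ can be organized as a sum of squares of elements of an $R[t]$-module of the stated rank, so that \ref{remgr} applies with the right $r$. This requires choosing the extra linear forms carefully (a generic line through $\xi$) and checking that the generators of $M$ as an $R[t]$-module are exactly the monomials of degrees $m,\dots,d$ in $x_1,\dots,x_{n-1}$, of which there are $\choose{n+d-2}{n-2}-\choose{n+m-3}{n-2}$. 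Everything else — the binomial bookkeeping and the specialization argument preserving $g_r$ via Theorem~\ref{leepthm} — is routine.
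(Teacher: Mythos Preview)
Your overall strategy---place $\xi=[0:\cdots:0:1]$, observe that each $p_\nu$ in $f=\sum p_\nu^2$ has $x_n$-degree at most $d-m$, and feed the resulting module into Remark~\ref{remgr} together with $g_r(\R[t])=r+1$---is exactly the right one. But the reduction to a \emph{univariate} base ring is not carried out correctly, and this is the crux of the argument.

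You set things up so that the natural base ring is $A=\R[x_1,\dots,x_{n-1}]$, realize that Corollary~\ref{leepcor} does not apply to a multivariate $A$, and then propose to ``specialize the $n-2$ extra variables generically to make it $\R[t]$''. This does not work: substituting values for variables produces a different polynomial, and an sos representation of the specialization does not lift back to one of $f$. Theorem~\ref{leepthm} says $g_r(k[t])=g_r(k(t))$, not that $g_r$ is preserved under specialization of polynomial variables. Relatedly, your description ``the monomials in $x_1,\dots,x_{n-1}$ of degree between $m$ and $d$'' does not have cardinality $\choose{n+d-2}{n-2}-\choose{n+m-3}{n-2}$, so the module you describe and the rank you claim do not match.

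The clean fix, which is what the paper does, is to dehomogenize \emph{away} from $\xi$: set $x_1=1$ (not $x_n=1$). Now each $p'_\nu\in\R[x_2,\dots,x_n]$ has total degree $\le d$ and $x_n$-degree $\le d-m$. Take $A=\R[x_2]$, which is already univariate over $\R$, and let $M\subset\R[x_2,\dots,x_n]$ be the $A$-module spanned by the monomials $x_3^{\beta_3}\cdots x_n^{\beta_n}$ with $\beta_3+\cdots+\beta_n\le d$ and $\beta_n\le d-m$. Every $p'_\nu$ lies in $M$, and the number of these monomials is
\[
\sum_{j=0}^{d-m}\choose{n-3+d-j}{n-3}\>=\>\choose{n-2+d}{n-2}-\choose{n-3+m}{n-2}.
\]
Now Remark~\ref{remgr} with $A=\R[x_2]$ and Corollary~\ref{leepcor} give the bound directly. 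No specialization or genericity is needed; the point is simply to single out one variable \emph{other than} $x_n$ as the univariate parameter, so that the constraint $\beta_n\le d-m$ becomes a constraint on the module generators rather than on the base ring.
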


The most interesting case is when $n=3$ (ternary forms):

\begin{cor}\label{ternarybound}%
(Leep)
If $f\in\Sigma_{3,2d}$ has a real zero of multiplicity $2m\ge0$ then
$\ell(f)\le d+2-m$. In particular, $p(3,2d)\le d+2$ for all $d\ge1$.
\qed
\end{cor}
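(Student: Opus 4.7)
The plan is to specialize Theorem~\ref{leepboundallg} to the ternary case $n=3$. With $n=3$ the two binomial coefficients appearing in the general bound reduce to $\choose{n+d-2}{n-2}=\choose{d+1}{1}=d+1$ and $\choose{n+m-3}{n-2}=\choose{m}{1}=m$. Substituting these values into
$$\ell(f)\>\le\>1+\choose{n+d-2}{n-2}-\choose{n+m-3}{n-2}$$
yields $\ell(f)\le 1+(d+1)-m=d+2-m$, which is exactly the first assertion of the corollary.

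For the ``in particular'' statement, apply the first assertion with $m=0$. One only has to observe that every nonzero $f\in\Sigma_{3,2d}$ admits a real point of multiplicity $0$, i.e., some $\xi\in\P^2(\R)$ with $f(\xi)\ne0$: this is immediate since a nonzero polynomial cannot vanish identically on $\R^3$ (the field $\R$ being infinite). Thus $\ell(f)\le d+2$ for every $f\in\Sigma_{3,2d}$, whence $p(3,2d)\le d+2$.

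No substantial obstacle is expected. The corollary is a routine numerical specialization, and all the mathematical content already lives in Theorem~\ref{leepboundallg}, which itself rests on Leep's Corollary~\ref{leepcor} on sums of squares of linear forms over $R[t]$.
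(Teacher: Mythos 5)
Your proof is correct and matches the paper's approach exactly: the corollary is a straight numerical specialization of Theorem~\ref{leepboundallg} to $n=3$, and your binomial evaluations $\choose{d+1}{1}=d+1$, $\choose{m}{1}=m$ give $\ell(f)\le d+2-m$ as claimed. The side remark that a nonzero form always has a real point of multiplicity $0$ (so $m=0$ is always available) is the same observation implicit in the paper's passage from the general bound to the ``in particular'' clause.
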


\begin{proof}[Proof of Theorem \ref{leepboundallg}]
Let $\xi\in\P^{n-1}(\R)$ be a real zero of $f$ of multiplicity
$2m\ge0$. After a linear change of coordinates we can assume $\xi=
(0:\cdots:0:1)$. Then $\deg_{x_n}(f)=2(d-m)$.
If $f$ is written as a sum of squares, say $f=\sum_\nu p_\nu^2$,
the $p_\nu$ contain only monomials $\x^\beta$ with $|\beta|=d$ and
$0\le\beta_n\le d-m$. Let $\R[x_1,\dots,x_n]\to\R[x_2,\dots,x_n]$,
$g\mapsto g':=g(1,x_2,\dots,x_n)$ be the dehomogenization operator.
The $p'_\nu$ lie in the
$\R[x_2]$-submodule of $\R[x_2,\dots,x_n]$ spanned by the monomials
$x_3^{\beta_3}\cdots x_n^{\beta_n}$
with $\beta_3+\cdots+\beta_n\le d$ and $\beta_n\le d-m$. The number
of these monomials is
$$\sum_{j=0}^{d-m}\choose{n-3+d-j}{n-3}\>=\>\choose{n-2+d}{n-2}
-\choose{n-3+m}{n-2}\>=:\>N.$$
Using Remark \ref{remgr} and Corollary \ref{leepcor} we conclude
$\ell(f)\le N+1$.
\end{proof}

\begin{rems}\label{uppbdvgl}%
\hfil
\smallskip

1.\
Denote the bound from Theorem \ref{leepboundallg} by $L(n,2d):=1+
\choose{n+d-2}{n-2}$. Let us compare $L(n,2d)$ with the general
upper bound $\Lambda(n,2d)$ (Theorem \ref{clrbounds}). When $n=3$
we have $L(3,2d)=d+2$, which is significantly better than
$\Lambda(3,2d)=2d+1$. For $n\ge4$, however $L(n,2d)$ is usually
weaker (bigger) than $\Lambda(n,2d)$. This is clear since $L(n,2d)$
is a polynomial of degree $n-2$ in $d$, whereas $\Lambda(n,2d)=
O(d^{(n-1)/2})$.
In fact, there are only three pairs $(n,d)$ with $n\ge4$ and $d\ge1$
where $L(n,2d)<\lfloor\Lambda(n,2d)\rfloor$. These are
$$(n,2d)\>=\>(4,6),\ (4,8)\text{ and }(4,10),$$
where $\lfloor\Lambda(n,2d)\rfloor=12$, $17$, $23$ respectively,
while $L(n,2d)$ is smaller by one.
\smallskip

2.\
As far as we know, the upper bounds $p(3,2d)\le d+2$ for $d\ge2$
(\ref{ternarybound}), resp.\
$$p(n,2d)\>\le\>\begin{cases}L(n,2d)&n=4\text{ and }3\le d\le5,\\
\Lambda(n,2d)&\text{otherwise}\end{cases}$$
for $n\ge4$ and $d\ge2$, are the best ones known to date.
\end{rems}


\section{Typical sum of squares lengths}\label{sectyp}%

Let $\x=(x_1,\dots,x_n)$. We show that the set of typical sos lengths
for forms in $\R[\x]_d$ is determined by $p(n,2d)$ and by the
(unique) typical sos length over $\C$.

\begin{lab}
Fix $n,\,d\ge1$, and work over $\C$ first. Let $t=t(n,2d)\ge1$ be the
minimal number for which the set $\{p_1^2+\cdots+p_t^2\colon p_1,
\dots,p_t\in\C[\x]_d\}$ is (Zariski) dense in $\C[\x]_{2d}$. By
Sard's theorem, this is also the minimal number $t$ of forms
$p_1,\dots,p_t$ of degree $d$ with $\langle x_1,\dots,x_n\rangle^{2d}
\subset\langle p_1,\dots,p_r\rangle$,
where $\langle\cdots\rangle$ denotes the ideal generated in $\C[\x]$.
\end{lab}

\begin{lab}
It is clear that $t(2,2d)=2$ for all $d\ge1$ and $t(n,2)=n$ for all
$n\ge1$, and that $t=t(n,2d)$ always satisfies $tN_{n,d}-\choose t2
\ge N_{n,2d}$, i.e.\ $t\ge\lambda(n,2d)$ (see \ref{clrbounds}).
A~general conjecture due to Fr\"oberg \cite{fb} predicts the Hilbert
series of an ideal generated by a generic collection of forms of
prescribed degrees. As a very particular case, this conjecture
predicts $t(n,2d)=\lceil\lambda(n,2d)\rceil$ for all $(n,d$).
For all $(n,d)$, the theorem of Fr\"oberg, Ottaviani and Shapiro
\cite{fos} implies $t(n,2d)\le2^{n-1}$.
In particular we have $t(n,2d)=2^{n-1}$ when $\lceil\lambda(n,2d)
\rceil=2^{n-1}$, which is the case for $d$ large enough. For smaller
$d$ the values of $t(n,2d)$ may be determined with the help of a
computer algebra system, as long as $n$ is not too big. In this way
one shows that $t(3,2)=t(3,4)=3$ and $t(3,2d)=4$ for $2d\ge6$. For
$n=4$ the values of $t$ are
$$t(4,2d)\ =\ \begin{cases}5&\mbox{if \ }2\le d\le4,\\6&\mbox{if \ }
5\le d\le8,\\7&\mbox{if \ }9\le d\le20,\\8&\mbox{if \ }d\ge21.
\end{cases}$$
\end{lab}

\begin{lab}
Now fix $n,\,d\ge1$. For $r\ge1$ let
$$\Sigma_{n,2d}(r)\>:=\>\{f\in\Sigma_{n,2d}\colon\ell(f)\le r\},$$
and let
$$T(n,2d)\>:=\>\Bigl\{r\ge1\colon\Sigma_{n,2d}(r)\setminus
\Sigma_{n,2d}(r-1)\text{ has nonempty interior in }\R[\x]_{2d}
\Bigr\}.$$
$T(n,2d)$ is the set of \emph{typical sos lengths} for
$\Sigma_{n,2d}$, i.e., the set of numbers $r$ for which there exists
a non-empty open set consisting of forms of sos length $r$. Clearly
$t(n,2d)$ is the smallest element of $T(n,2d)$.
\end{lab}

\begin{prop}\label{equidim}%
Let $n,\,d,\,r\ge1$, and let $m_r=\dim(I_{2d})$ where $I$ is an ideal
generated by a generic sequence of $r$ forms of degree~$d$.
\begin{itemize}
\item[(a)]
$\Sigma_{n,2d}(r)$ is a closed semi-algebraic subset of
$\R[\x]_{2d}$.
\item[(b)]
For every $f\in\Sigma_{n,2d}(r)$, the local dimension of $\Sigma
_{n,2d}(r)$ at $f$ is $m_r$.
\item[(c)]
If $r\ge t(n,2d)$ then $\Sigma_{n,2d}(r)$ is the closure of its
interior.
\end{itemize}
\end{prop}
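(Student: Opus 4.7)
The plan is to base the entire argument on the sum-of-squares map
$$\sigma_r\colon(\R[\x]_d)^r\longrightarrow\R[\x]_{2d},\qquad
(p_1,\dots,p_r)\longmapsto p_1^2+\cdots+p_r^2,$$
whose image is $\Sigma_{n,2d}(r)$. Part~(a) then splits into two small verifications: semi-algebraicness of the image is Tarski-Seidenberg. For closedness, if $f_k=\sum_ip_{i,k}^2\to f$, I would note that pointwise on the real unit sphere one has $p_{i,k}(x)^2\le f_k(x)$, so the tuples $(p_{i,k})_i$ are uniformly bounded in the supremum norm on $S^{n-1}$ (which is a genuine norm on $\R[\x]_d$, equivalent to any coefficient norm); extracting a convergent subsequence produces $f=\sum_ip_i^2$.

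For~(b) I would work with the differential $d\sigma_r(p_1,\dots,p_r)\colon(q_1,\dots,q_r)\mapsto 2\sum_ip_iq_i$, whose image is the degree-$2d$ piece of the ideal $\langle p_1,\dots,p_r\rangle\subset\R[\x]$. By the very definition of $m_r$ its generic rank equals $m_r$, attained on an open subset $U\subset(\R[\x]_d)^r$ whose complement $Z$ is a proper Zariski-closed subvariety. The inequality $\dim_f\Sigma_{n,2d}(r)\le m_r$ follows at once because the Zariski closure of the image of $\sigma_r$ has dimension equal to this generic rank. For the matching lower bound, the central step is to show that $\sigma_r(U)$ is Euclidean-dense in $\Sigma_{n,2d}(r)$: given $f=\sigma_r(p_1,\dots,p_r)$, perturb to $(p_i+\epsilon q_i)$ with a generic direction $(q_i)$; since $Z$ is a proper Zariski-closed subset of the irreducible variety $(\R[\x]_d)^r$, the set of $(q_i)$ along which the line $\epsilon\mapsto(p_i+\epsilon q_i)$ lies entirely in $Z$ is itself proper Zariski-closed, and for other $(q_i)$ the line meets $Z$ only at finitely many values, so $\sigma_r(p_i+\epsilon q_i)$ lies in $\sigma_r(U)$ for all small $\epsilon\ne0$ and tends to $f$. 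Since $d\sigma_r$ has constant rank $m_r$ on $U$, the constant-rank theorem yields $\dim_h\sigma_r(U)=m_r$ for every $h\in\sigma_r(U)$, and upper semi-continuity of the local-dimension function on the semi-algebraic set $\Sigma_{n,2d}(r)$ then propagates the bound $\dim_f\Sigma_{n,2d}(r)\ge m_r$ to every $f$.

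Part~(c) then comes for free: when $r\ge t(n,2d)$ we have $m_r=N_{n,2d}$, so on $U$ the differential $d\sigma_r$ is surjective onto $\R[\x]_{2d}$, and the submersion theorem places $\sigma_r(U)\subset\interior\Sigma_{n,2d}(r)$; combined with the density from~(b), this gives $\Sigma_{n,2d}(r)=\ol{\interior\Sigma_{n,2d}(r)}$. The main obstacle I anticipate is the density claim in~(b): the constant-rank and submersion theorems, Tarski-Seidenberg, and upper semi-continuity of local dimension for semi-algebraic sets are all off-the-shelf, but a little care is required to justify that an affine line through a fixed tuple $(p_1,\dots,p_r)$ in a generic direction leaves the bad set $Z$ except at finitely many parameter values.
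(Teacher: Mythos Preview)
Your proposal is correct and follows essentially the same route as the paper: both arguments work with the sum-of-squares map, identify its differential with multiplication into $\langle p_1,\dots,p_r\rangle_{2d}$, and obtain (b) from the generic rank $m_r$ together with density of the generic locus in the image, then deduce (c) from $m_r=N_{n,2d}$. The paper's proof is terser---it simply invokes properness for~(a) and asserts density for~(b)---whereas you spell out the compactness argument, the line-perturbation for density, and the upper semi-continuity of local dimension; these are exactly the details the paper leaves implicit.
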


\begin{proof}
Consider the map $\phi\colon(\R[\x]_d)^r\to\R[\x]_{2d}$,
$(p_1,\dots,p_r)\mapsto\sum_{i=1}^rp_i^2$. (a)~is clear since $\phi$
is a proper polynomial map. In (b) it is clear that
$\dim\Sigma_{n,2d}(r)\le m_r$. For all $r$-tuples $p=(p_1,\dots,p_r)
\in(\R[\x]_d)^r$ outside a proper real algebraic set we have
$\dim\langle p_1,\dots,p_r\rangle_{2d}=m_r$. For these $p$, the local
dimension of $\Sigma_{n,2d}(r)$ at $\phi(p)$ is equal to $m_r$. Since
these $p$ are dense in $\im(\phi)$ we get~(b). For $r\ge t(n,2d)$ we
have $m_r=N_{n,2d}$, so (b) implies~(c).
\end{proof}

\begin{cor}\label{typle}%
The typical sos lengths for $\Sigma_{n,2d}$ are $T(n,2d)=\{t,\,t+1,\,
\dots,\,p\}$ where $t=t(n,2d)$ and $p=p(n,2d)$.
\end{cor}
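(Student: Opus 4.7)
The containment $T(n,2d)\subset\{t,\dots,p\}$ is immediate: $t$ is the minimum of $T(n,2d)$ by definition, and no $r>p$ lies in $T(n,2d)$ since every form in $\Sigma_{n,2d}$ has sos length at most $p$. The substance of the statement is therefore the reverse inclusion, and my plan breaks this into two manageable pieces.

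First, I would reduce the assertion ``$r\in T(n,2d)$'' to the purely set-theoretic assertion ``$\Sigma_{n,2d}(r-1)\subsetneq\Sigma_{n,2d}(r)$'' for $t\le r\le p$. This is where Proposition~\ref{equidim}(c) does the real work: since $r\ge t$, the closed semialgebraic set $\Sigma_{n,2d}(r)$ is the closure of its interior, so $\interior\bigl(\Sigma_{n,2d}(r)\bigr)\setminus\Sigma_{n,2d}(r-1)$ is an open subset of $\R[\x]_{2d}$. If this open set were empty, then $\interior(\Sigma_{n,2d}(r))\subset\Sigma_{n,2d}(r-1)$, and taking closures on both sides would force $\Sigma_{n,2d}(r)=\Sigma_{n,2d}(r-1)$. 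Thus strict inclusion at step $r$ is precisely what is needed to conclude $r\in T(n,2d)$.

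The remaining task — and what I expect to be the core step — is to prove the strict inclusion $\Sigma_{n,2d}(r-1)\subsetneq\Sigma_{n,2d}(r)$ for every $t\le r\le p$. I would proceed by contradiction and exploit a simple ``square-removal'' propagation. Suppose $\Sigma_{n,2d}(r)=\Sigma_{n,2d}(r-1)$ for some such $r$. I claim $\Sigma_{n,2d}(s)\subset\Sigma_{n,2d}(r-1)$ for every $s\ge r-1$, proved by induction on $s$. For the inductive step, given $f=p_1^2+\cdots+p_{s+1}^2\in\Sigma_{n,2d}(s+1)$, observe that $g:=p_1^2+\cdots+p_s^2\in\Sigma_{n,2d}(s)\subset\Sigma_{n,2d}(r-1)$ by hypothesis, so $g=q_1^2+\cdots+q_{r-1}^2$, and hence $f=q_1^2+\cdots+q_{r-1}^2+p_{s+1}^2\in\Sigma_{n,2d}(r)=\Sigma_{n,2d}(r-1)$. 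Applying this with $s=p$ gives $\Sigma_{n,2d}=\Sigma_{n,2d}(p)\subset\Sigma_{n,2d}(r-1)$, contradicting the definition of $p$ as soon as $r\le p$.

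The only delicate point is making sure the propagation argument is framed correctly (the hypothesis $\Sigma_{n,2d}(r)=\Sigma_{n,2d}(r-1)$ is what allows one to rewrite the $r$-square partial sum $g$ using only $r-1$ squares, and this is what drives the induction); beyond that, the argument is entirely formal. Combining the two pieces yields $\{t,\dots,p\}\subset T(n,2d)$ and finishes the proof.
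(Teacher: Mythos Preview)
Your proposal is correct and follows essentially the same route as the paper: reduce $r\in T(n,2d)$ to the strict inclusion $\Sigma_{n,2d}(r-1)\subsetneq\Sigma_{n,2d}(r)$ via Proposition~\ref{equidim}(c), then establish that inclusion. The paper simply asserts $\Sigma(r-1)\ne\Sigma(r)$ without comment, whereas you supply a clean justification via the square-removal induction; your added detail is valid (and arguably welcome), though your parenthetical summary slightly misidentifies which sum gets rewritten --- it is $f\in\Sigma(r)$, not $g$, that the hypothesis $\Sigma(r)=\Sigma(r-1)$ collapses to $r-1$ squares.
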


\begin{proof}
Write $\Sigma:=\Sigma_{n,2d}$, and let $t\le r\le p$. Then
$\Sigma(r-1)\ne\Sigma(r)$,
and hence $\interior \Sigma(r)\not\subset\Sigma(r-1)$ since
$\Sigma(r)=\ol{\interior \Sigma(r)}$ (Proposition \ref{equidim}). So
$\interior(\Sigma(r))\setminus\Sigma(r-1)$ is a non-empty open set of
forms of length equal to~$r$.
\end{proof}


\section{Lower bounds}\label{sectlowerbd}%

\begin{lab}\label{gramrappl}%
Let $k$ be a field of characteristic zero, let $V$ be a vector space
over $k$. For $m\ge0$ let $\sfS^mV$ be the $m$-th symmetric power of
$V$ over~$k$. We will identify $\sfS^mV$ with the subspace of
symmetric tensors in the $m$-fold tensor power $V^{\otimes m}$, which
is possible since $\ch(k)=0$.

Let $A$ be a $k$-algebra. The \emph{Gram tensor} of a given sum of
squares representation $f=\sum_{i=1}^ma_i^2$ in $A$ (with
$a_1,\dots,a_m\in A$) is the symmetric tensor
$$\theta\>=\>\sum_{i=1}^ma_i\otimes a_i\>\in\>\sfS^2A.$$
Of course we may as well regard $\theta$ as an element of $\sfS^2U$,
for any linear subspace $U\subset A$ containing $a_1,\dots,a_m$.
Two sum of squares representations $f=\sum_{i=1}^ma_i^2=\sum_{i=1}^m
b_i^2$ (with $a_i,\,b_i\in A$) are \emph{(orthogonally) equivalent}
if there exists an orthogonal matrix $u=(u_{ij})_{1\le i,j\le m}$
over $k$ (satisfying $uu^t=I$) such that $b_j=\sum_{i=1}^mu_{ij}a_i$
for $1\le j\le m$. Clearly, equivalent sum of squares representations
have the same Gram tensor. An elementary but important fact is that
the converse holds provided the field $k$ is real (see \cite{clr}).
\end{lab}

\begin{lab}
If $I\subset k[\x]=k[x_1,\dots,x_n]$ is a homogeneous ideal, we write
$h_j(I)=\dim k[\x]_j/I_j$ for $j\ge0$. For any set $Z\subset
\P^{n-1}(k)$, the full (saturated) vanishing ideal of $Z$ is denoted
$I(Z)$.
\end{lab}

We first discuss ternary forms (case $n=3$, $\x=(x_1,x_2,x_3)$), and
abbreviate $\Sigma_d:=\Sigma_{3,d}$.

\begin{prop}\label{langesos}%
Let $d\ge1$, and let $Z\subset\P^2(\R)$ be a set of $|Z|=
\choose{d+1}2$ real points in sufficiently general position. Then any
$f\in\Sigma_{2d}$ vanishing on $Z$ has a unique sum of squares
representation, up to orthogonal equivalence.
\end{prop}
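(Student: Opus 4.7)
The plan is to turn uniqueness of the sos representation into injectivity of a natural multiplication map on $\sfS^2 I(Z)_d$, and then settle injectivity by a dimension count which ultimately rests on the known Hilbert series of $I(Z)^2$ for a generic finite set $Z\subset\P^2$.

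The first observation is that if $f=p_1^2+\cdots+p_r^2$ with $p_i\in\R[\x]_d$ and $f$ vanishes at a real point $z\in\P^2(\R)$, then each $p_i$ vanishes at $z$, since a finite sum of squares of real numbers is zero only if each summand is. Consequently every sos representation of $f$ uses only forms lying in $I(Z)_d$, and its Gram tensor $\theta=\sum_i p_i\otimes p_i$ lies in $\sfS^2 I(Z)_d$. By the Gram-tensor principle recalled in \ref{gramrappl}, two sos representations are orthogonally equivalent iff they share the same Gram tensor, so it suffices to show that $\theta$ is determined by $f$.

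Consider the multiplication map
$$\mu\colon \sfS^2 I(Z)_d \longrightarrow (I(Z)^2)_{2d},\qquad p\otimes q\mapsto pq,$$
which is surjective by definition of $I(Z)^2$. Each sos representation of $f$ gives a preimage of $f$ under $\mu$, so uniqueness of the Gram tensor is precisely the injectivity of $\mu$. By surjectivity, this is in turn equivalent to the dimension equality
$$\dim\sfS^2 I(Z)_d\;=\;\dim (I(Z)^2)_{2d}.$$

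For generic $Z$ of size $\choose{d+1}2$, the evaluation conditions at the points of $Z$ are independent on $\R[\x]_d$, so $\dim I(Z)_d=N_{3,d}-\choose{d+1}2=d+1$, and hence $\dim\sfS^2 I(Z)_d=\choose{d+2}2$. A short arithmetic check shows that the expected value $N_{3,2d}-3\,\choose{d+1}2$ for $\dim(I(Z)^2)_{2d}$—obtained by counting the three order-$2$ vanishing conditions at each point of $Z$—also equals $\choose{d+2}2$. The main obstacle, and the only nontrivial input, is therefore to show that this expected value is actually attained by $(I(Z)^2)_{2d}$ for generic $Z\subset\P^2(\R)$. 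This is exactly the Hilbert-series statement for squared vanishing ideals of generic points in $\P^2$ that the introduction cites as proved (and that fails to be unconditionally available in $\P^{n-1}$ for $n\ge4$); granted this, $\mu$ is an isomorphism and the proposition follows.
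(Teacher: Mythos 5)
Your proof is correct and follows essentially the same route as the paper: reduce uniqueness to injectivity of the multiplication map $\sfS^2 I(Z)_d\to\R[\x]_{2d}$, compute dimensions, and invoke the known Hilbert series of $I(Z)^2$ for generic points in $\P^2$. One small point worth making explicit (the paper does so) is that $I(Z)_{d-1}=\{0\}$ for generic $Z$ of this size, which is what guarantees $(I(Z)^2)_{2d}=I(Z)_d\cdot I(Z)_d$ and hence the surjectivity of $\mu$ that your dimension-count equivalence relies on.
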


\begin{proof}
Let $I=I(Z)$. It is enough to show that the product map $\mu\colon
\sfS^2(I_d)\to\R[\x]_{2d}$ is injective. Indeed, if $f\in\Sigma_{2d}$
vanishes on $Z$ and $f=\sum_\nu p_\nu^2$ is any sum of squares
representation, then $p_\nu\in I_d$ for all~$\nu$. By the asserted
injectivity of $\mu$, the symmetric Gram tensor $\sum_\nu
p_\nu\otimes p_\nu$ is uniquely determined by $f$, which is the claim
(see \ref{gramrappl}).

Since the points in $Z$ are general we have $I_{d-1}=\{0\}$ and
$\dim(I_d)=\choose{d+2}2-\choose{d+1}2=d+1$.
In particular, the subspace $\im(\mu)=I_dI_d$ of $\R[\x]_{2d}$
satisfies $I_dI_d=(I^2)_{2d}$.
Injectivity of $\mu$ means that this space has dimension equal to
$\dim\sfS^2(I_d)=\choose{d+2}2$, and hence is equivalent to
\begin{equation}\label{h2di2}%
h_{2d}(I^2)\>=\>\choose{2d+2}2-\choose{d+2}2\>=\>3\choose{d+1}2.
\end{equation}
Equality \eqref{h2di2} is known to be true, see Iarrobino-Kanev
\cite{ik} Prop.\ 4.8. In fact, the complete Hilbert series of $I^2$
is determined there. This proves Proposition \ref{langesos}.
\end{proof}

Proposition \ref{langesos} has the following consequences:

\begin{cor}
If $Z\subset\P^2(\R)$ is a set of $s$ general points where
$\choose{d+1}2\le s\le\choose{d+2}2$, then any general sum of
squares $f\in\Sigma_{2d}$ that vanishes on $Z$ has sos length
$\ell(f)=\choose{d+2}2-s$.
\end{cor}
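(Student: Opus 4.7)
Set $k:=\choose{d+2}2-s$, so that for $Z$ in sufficiently general position (and $s\le\choose{d+2}2$) the space $I(Z)_d$ has dimension exactly $k$. The plan is to translate sum of squares representations of $f$ vanishing on $Z$ into positive semidefinite Gram tensors on $I(Z)_d$, and then apply Proposition \ref{langesos} to identify $\ell(f)$ with the rank of such a tensor.

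First, the upper bound $\ell(f)\le k$: if $f\in\Sigma_{2d}$ vanishes on $Z$ and $f=\sum_\nu p_\nu^2$ is any sos representation, then at each $\xi\in Z$ the nonnegative numbers $p_\nu(\xi)^2$ sum to $0$, so each $p_\nu$ vanishes on $Z$. Hence every $p_\nu$ lies in $I(Z)_d$ and $\ell(f)\le\dim I(Z)_d=k$.

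For the lower bound, pick a subset $Z_1\subset Z$ with $|Z_1|=\choose{d+1}2$; since $Z$ is general we may assume $Z_1$ is general enough for Proposition \ref{langesos} to apply, so the product map $\mu_1\colon\sfS^2 I(Z_1)_d\to\R[\x]_{2d}$ is injective. The inclusion $I(Z)_d\subset I(Z_1)_d$ yields $\sfS^2 I(Z)_d\subset\sfS^2 I(Z_1)_d$, and hence the restricted product map $\mu_Z\colon\sfS^2 I(Z)_d\to\R[\x]_{2d}$ is injective too. By the discussion in \ref{gramrappl}, this means the Gram tensor $\theta_f\in\sfS^2 I(Z)_d$ is uniquely determined by $f$, and $\ell(f)$ equals the rank of $\theta_f$ viewed as a positive semidefinite symmetric bilinear form.

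Via $\mu_Z$, the set of sos forms in $\Sigma_{2d}$ vanishing on $Z$ is in bijection with the positive semidefinite cone in $\sfS^2 I(Z)_d$. This cone has nonempty interior consisting of positive definite tensors, all of rank $k$, while the tensors of lower rank lie in the proper subvariety cut out by the vanishing of the determinant. Pulling back through the linear injection $\mu_Z$, a generic such $f$ corresponds to a full-rank $\theta_f$, giving $\ell(f)=k$. The only delicate point is to keep the word \emph{general} straight: it governs both the placement of $Z$ (so that $\dim I(Z)_d=k$ and Proposition \ref{langesos} applies to $Z_1$) and the choice of $f$ among sos forms vanishing on $Z$; both are standard open density conditions.
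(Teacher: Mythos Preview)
Your argument is correct and follows essentially the same route as the paper's proof: both use that $\dim I(Z)_d=\choose{d+2}2-s$ and that injectivity of the product map $\sfS^2 I(Z)_d\to\R[\x]_{2d}$ (inherited from Proposition~\ref{langesos} via a subset $Z_1\subset Z$ of size $\choose{d+1}2$) makes the Gram tensor unique, so that a generic $f$ has full-rank Gram tensor and hence length exactly $k$. The paper compresses all of this into a single sentence, so your version simply makes explicit the details the paper leaves to the reader.
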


\begin{proof}
Since $\dim I(Z)_d=\choose{d+2}2-s=:m$, the general $f\in
\Sigma_{2d}$ vanishing on $Z$ has the form $f=p_1^2+\cdots+p_m^2$
where $p_1,\dots,p_m$ form a basis of $I(Z)_d$.
\end{proof}

In particular we get a lower bound for the Pythagoras number:

\begin{cor}\label{lowerboundn3}%
Let $d\ge2$. Any general sum of squares of degree $2d$ in
$\R[x_1,x_2,x_3]$ vanishing in $\choose{d+1}2$ general $\R$-points
has sos length $d+1$. Therefore $p(3,2d)\ge d+1$.
\qed
\end{cor}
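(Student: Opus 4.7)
The plan is to deduce the statement as an immediate specialization of the preceding corollary, taking $s$ equal to the smallest value in the allowed range. Setting $s=\binom{d+1}2$ (which satisfies $\binom{d+1}2\le s\le\binom{d+2}2$ trivially), the preceding corollary asserts that for $Z\subset\P^2(\R)$ a set of $\binom{d+1}2$ general real points, any general $f\in\Sigma_{2d}$ vanishing on $Z$ satisfies
\[
\ell(f)\>=\>\choose{d+2}2-\choose{d+1}2\>=\>d+1,
\]
by the elementary binomial identity. This directly gives the first assertion.

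Next I would verify non-emptiness, i.e.\ that such forms $f$ actually exist. For $Z$ in general position, the dimension count $\dim I(Z)_d=\binom{d+2}2-\binom{d+1}2=d+1>0$ shows that $I(Z)_d$ is non-zero. Picking any basis $p_1,\dots,p_{d+1}$ of $I(Z)_d$, the form $f_0=p_1^2+\cdots+p_{d+1}^2$ lies in $\Sigma_{2d}$ and vanishes on $Z$, so the semialgebraic class of sums of squares vanishing on $Z$ has non-empty interior (inside the slice cut out by the vanishing conditions) and the genericity statement above is not vacuous.

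Finally, since $p(3,2d)=\sup\{\ell(g)\colon g\in\Sigma_{3,2d}\}$ and we have just produced forms $f$ with $\ell(f)=d+1$, the bound $p(3,2d)\ge d+1$ follows.

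The only ``substance'' behind this corollary is concentrated upstream, in Proposition~\ref{langesos} (and the Iarrobino--Kanev Hilbert-series computation it invokes to guarantee injectivity of the multiplication map $\sfS^2(I_d)\to\R[\x]_{2d}$). Given those results, no genuine obstacle remains here: the argument is purely a bookkeeping application of the previous corollary together with the binomial identity $\binom{d+2}2-\binom{d+1}2=d+1$.
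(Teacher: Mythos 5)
Your proposal is exactly the paper's (implicit) argument: Corollary~\ref{lowerboundn3} is stated with an immediate \qed because it is just the specialization $s=\binom{d+1}{2}$ of the preceding corollary together with the identity $\binom{d+2}{2}-\binom{d+1}{2}=d+1$. The added non-emptiness check is a harmless and correct bit of extra care.
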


Combined with Corollary \ref{ternarybound} this gives:

\begin{thm}\label{p32d}
For any $d\ge2$, the Pythagoras number $p(3,2d)$ is either $d+1$ or
$d+2$.
\qed
\end{thm}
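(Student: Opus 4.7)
The statement is an immediate sandwich result, so the plan is essentially to combine the two bounds that have just been established, one from each side, and observe that they differ by exactly one.

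First, I would invoke Corollary \ref{ternarybound} (Leep's bound), which gives the upper estimate $p(3,2d)\le d+2$ for all $d\ge1$. This bound is already derived from Corollary \ref{leepcor} via Remark \ref{remgr}, dehomogenizing along a chosen coordinate and viewing the summands as linear forms over $\R[x_2]$ in the remaining variables. No additional argument is needed here; the inequality is ready to use.

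Next, I would invoke Corollary \ref{lowerboundn3}, which yields $p(3,2d)\ge d+1$ for all $d\ge2$. The substance of that corollary ultimately rests on Proposition \ref{langesos}: picking $\choose{d+1}2$ sufficiently general real points $Z\subset\P^2(\R)$ forces any sum of squares representation of an $f\in\Sigma_{2d}$ vanishing on $Z$ to use summands from $I(Z)_d$, and the injectivity of $\sfS^2(I(Z)_d)\to\R[\x]_{2d}$ (equivalently, the Hilbert function computation $h_{2d}(I(Z)^2)=3\choose{d+1}2$ from Iarrobino-Kanev) pins down the Gram tensor. This is where the real content sits; for our present theorem, however, we only need the conclusion $p(3,2d)\ge d+1$ and may take Corollary \ref{lowerboundn3} as a black box.

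Combining the two estimates, $d+1\le p(3,2d)\le d+2$, and since $p(3,2d)$ is an integer it must equal either $d+1$ or $d+2$. There is really no obstacle in the final assembly; any difficulty has already been absorbed into Proposition \ref{langesos} (lower bound) and Theorem \ref{leepboundallg} (upper bound). The remaining interesting question, which is not addressed by this theorem, is to decide for each $d$ which of the two values is actually attained.
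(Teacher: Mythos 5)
Your proposal matches the paper's argument exactly: the theorem follows immediately by combining the lower bound $p(3,2d)\ge d+1$ from Corollary \ref{lowerboundn3} with Leep's upper bound $p(3,2d)\le d+2$ from Corollary \ref{ternarybound}. The paper treats this as an immediate consequence (hence the \qed with no written proof), and your account of where the real work lies --- Proposition \ref{langesos} for the lower bound and Theorem \ref{leepboundallg} for the upper --- is accurate.
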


\begin{lab}
In principle the same argument extends to the case of $n\ge4$
variables. In general however, when $I$ is the vanishing ideal of a
generic finite set of points in $\P^{n-1}$, the Hilbert function of
$I^2$ is known only conjecturally. In \cite{ik} Sect.~3.2,
Iarrobino-Kanev formulate conjectures about the dimensions of tangent
spaces to catalecticant varieties corresponding to power sums. These
conjectures can be formulated in terms of the Hilbert function of
$I^2$. In particular, if $d$ is the lowest degree with $I_d\ne\{0\}$,
and if $b=\dim(I_d)$, it is conjectured that $(I^2)_{2d}=I_dI_d$ has
the maximum possible dimension $\choose{b+1}2$, unless a smaller
value is forced by the Alexander-Hirschowitz theorem. Explicitly,
this means (we write $N_d:=N_{n,d}=\choose{n+d-1}{n-1}$ in the
sequel):
\end{lab}

\begin{conj}\label{iakaconj}%
(Iarrobino-Kanev, \cite{ik} Conjecture 3.25)
Let $n\ge3$, $d\ge2$ and $N_{d-1}\le s<N_d$, and let $I=I(Z)$ be the
vanishing ideal of a set $Z$ of $s$ general points in $\P^{n-1}$.
Then
$$h_{2d}(I^2)\>=\>\max\Bigl\{ns,\>N_{2d}-\choose{N_d-s+1}2\Bigr\},$$
except for $(n,d,s)=(3,2,5)$, $(4,2,9)$ and $(5,2,14)$, where $\max$
has to be replaced by $\min$.
\end{conj}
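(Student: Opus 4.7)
The plan is to prove the two inequalities packaged in the equality separately. The direction $h_{2d}(I^2)\ge\max\{ns,\,N_{2d}-\choose{N_d-s+1}{2}\}$ is elementary. On one hand, the hypothesis $N_{d-1}\le s<N_d$ together with generality of $Z$ forces $I_{d-1}=0$ and $\dim(I_d)=N_d-s$, so $(I^2)_{2d}$, being the image of the multiplication map $\mu\colon\sfS^2(I_d)\to\R[\x]_{2d}$, has dimension at most $\choose{N_d-s+1}{2}$. On the other hand, $I^2\subset I^{(2)}$ (the symbolic square, i.e.\ the ideal of the double-point scheme $2Z$), and the Alexander--Hirschowitz theorem yields $h_{2d}(I^{(2)})=\min\{ns,N_{2d}\}$ outside its own short list of exceptions; in the range $s<N_d$ of the conjecture this simplifies to $ns$.

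The hard direction is the upper bound $h_{2d}(I^2)\le\max\{\dots\}$, and I would split it by which term of the maximum dominates. In the regime $ns\ge N_{2d}-\choose{N_d-s+1}{2}$ (small $s$), the task reduces to proving $(I^2)_{2d}=(I^{(2)})_{2d}$, that is, the ideal of $2Z$ is generated, in degrees up to $2d$, by products of two forms vanishing on $Z$. In the complementary regime (large $s$), one must instead prove that $\mu$ is injective, a transversality statement for the symmetric square of the special linear system $I_d$. Neither is formal, and both are strictly sharper than Alexander--Hirschowitz for $I^{(2)}$ alone.

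My approach would be the Horace differential method, adapted from the classical Alexander--Hirschowitz setting to squares of ideals: fix a hyperplane $H\cong\P^{n-2}$, specialize $s'$ of the $s$ points onto $H$ with $s'$ chosen to balance source and target dimensions, let $Z'=Z\cap H$ and $Z''$ be the residual scheme, and use the restriction--residue exact sequence relating $(I(Z)^2)_{2d}$ on $\P^{n-1}$ to the corresponding graded piece of $I_H(Z')^2$ on $H$ and to a residual ideal on $\P^{n-1}$. This sets up a joint induction on $(n,d)$, with base case $n=3$ supplied by the Iarrobino--Kanev result already invoked in Proposition~\ref{langesos}, and $d=2$ handled by a direct apolarity/catalecticant computation, which is exactly the calculation that produces the three sporadic exceptions $(3,2,5)$, $(4,2,9)$, $(5,2,14)$.

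The main obstacle, and the reason Conjecture~\ref{iakaconj} remains open in general, is that the restriction of $I^2$ to $H$ is \emph{not} equal to $I_H(Z')^2$: there is an extra Koszul-type contribution from pairs $(f,g)\in I\times I$ one of whose factors already lies in the ideal of $H$. The Horace sequences for $I^2$ are therefore subtler than in the classical setup, and one has to carry conormal data for $Z$ alongside $I^2$ throughout the specialization. Controlling this transversality in a degeneration-stable way, in both the injectivity-of-$\mu$ regime and the $I^2=I^{(2)}$ regime, is the crux that has so far prevented a uniform proof for $n\ge4$.
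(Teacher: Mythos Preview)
The statement is a \emph{conjecture}; the paper does not prove it and explicitly treats it as open (known only for $n=3$, and for $n\ge4$ in the narrow range $N_d-n\le s<N_d$, both via \cite{ik}). There is therefore no ``paper's own proof'' to compare your proposal against. You seem aware of this, since your write-up ends by explaining why your approach does not close.

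Your treatment of the direction $h_{2d}(I^2)\ge\max\{\cdots\}$ is correct and coincides with what the paper records in the remark after Corollary~\ref{conjpythbound}: from $I_{<d}=0$ one gets $(I^2)_{2d}=I_dI_d$, a quotient of $\sfS^2(I_d)$, giving the bound $N_{2d}-\choose{N_d-s+1}{2}$; and $I^2\subset I^{(2)}$ together with Alexander--Hirschowitz gives the bound $ns$, with the three listed triples arising precisely from the degree-$4$ exceptions of that theorem.

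For the reverse inequality your Horace-type induction on $(n,d)$ is a plausible research plan, and your identification of the obstruction---that the restriction of $I^2$ to a hyperplane is not the square of the restricted ideal---is accurate. But this is an outline of why the problem is hard, not a proof; as written it does not establish the conjecture for any new $(n,d,s)$. The paper makes no attempt in this direction either: it uses Conjecture~\ref{iakaconj} purely as a hypothesis for Corollary~\ref{conjpythbound} and Theorem~\ref{asymptbds}, and notes only that individual cases can be checked by exhibiting a single set $Z$ with the correct $h_{2d}(I^2)$.
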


Of course, the exceptional cases correspond to exceptional cases of
the Alexander-Hirschowitz theorem. Conjecture \ref{iakaconj} is
proved in \cite{ik} for $n=3$, and also for $n\ge4$ in the case
$N_d-n\le s<N_d$.
Note that \cite{ik} Conjecture 3.25 contains misprints, but compare
with loc.\,cit., Conjecture 3.20.

Assuming Conjecture \ref{iakaconj} we can conclude:

\begin{cor}\label{conjpythbound}%
Let $n\ge4$ and $d\ge2$, and let $s=s_{\min}(n,d)$ be the smallest
integer satisfying $\choose{N_d-s+1}2\le N_{2d}-ns$. Then $N_{d-1}<s
<N_d$. If Conjecture \ref{iakaconj} is true for the given values of
$(n,d,s)$, then
$$p(n,2d)\>\ge\>N_d-s.$$
In fact, then, any generic sum of squares in $\Sigma_{2d}$ that
vanishes in a generic set of $s$ real points has sos length equal to
$N_d-s$.
\end{cor}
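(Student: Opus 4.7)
The plan is to imitate Proposition \ref{langesos}, substituting Conjecture \ref{iakaconj} for the Iarrobino--Kanev result that was available in the $n=3$ case. The argument splits into two parts: (i)~verify the combinatorial assertion $N_{d-1}<s<N_d$ for $s=s_{\min}(n,d)$, and (ii)~use the hypothesis on $h_{2d}(I^2)$ to force the multiplication map $\mu\colon\sfS^2(I_d)\to\R[\x]_{2d}$ to be injective, and then read off the sos length.

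For (i), I would set $f(s):=(N_{2d}-ns)-\choose{N_d-s+1}2$. This is a downward parabola in $s$ whose maximum occurs near $s=N_d-n+\tfrac12$, so it suffices to check that $f(N_{d-1})<0$ and that $f$ is non-negative somewhere in $\{N_{d-1}+1,\dots,N_d-1\}$. Both assertions reduce to explicit binomial identities using $N_d-N_{d-1}=\choose{n+d-2}{n-2}$, valid for all $n\ge 4$ and $d\ge 2$. One must additionally confirm that $s_{\min}$ never coincides with a relevant Alexander--Hirschowitz exceptional value $(n,d,s)=(4,2,9)$ or $(5,2,14)$; a short arithmetic check shows $s_{\min}(4,2)=5$ and $s_{\min}(5,2)=8$, so this situation does not arise.

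For (ii), let $Z\subset\P^{n-1}(\R)$ consist of $s=s_{\min}$ generic real points and set $I:=I(Z)$. Since $s<N_d$, the points impose independent conditions in degree $d$, yielding $b:=\dim I_d=N_d-s$. Conjecture \ref{iakaconj} then gives $h_{2d}(I^2)=\max\bigl\{ns,\,N_{2d}-\choose{b+1}2\bigr\}$, and the defining inequality of $s_{\min}$ is precisely the statement that the second term dominates. Hence $\dim (I^2)_{2d}=\choose{b+1}2=\dim\sfS^2(I_d)$, forcing $\mu$ to be injective. The argument of Proposition \ref{langesos} now adapts directly: any $f\in\Sigma_{2d}$ vanishing on $Z$ has a representation $f=\sum_\nu p_\nu^2$ with $p_\nu\in I_d$, and injectivity of $\mu$ makes the Gram tensor, and hence the representation, unique up to orthogonal equivalence. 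For generic such $f$, say $f=\sum_{\nu=1}^b p_\nu^2$ with $p_1,\dots,p_b$ a basis of $I_d$, the corresponding Gram tensor in $\sfS^2(I_d)$ has full rank $b$; by uniqueness no shorter sos representation exists, so $\ell(f)=b=N_d-s$.

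The main obstacle is really the bookkeeping in part~(i): the strict bounds $N_{d-1}<s_{\min}<N_d$ and the exclusion of the Alexander--Hirschowitz exceptional triples, while elementary, require a careful case-analysis. Once (i) is in place the conjectured Hilbert function formula does all the work in (ii), and the remainder is essentially the $n=3$ argument transplanted to higher $n$.
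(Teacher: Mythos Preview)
Your proposal is correct and follows essentially the same route as the paper: the combinatorial check $N_{d-1}<s_{\min}<N_d$ via the quadratic (your $f(s)$ is $-\tfrac12$ times the paper's $P(s)$), followed by the injectivity of $\mu\colon\sfS^2(I_d)\to\R[\x]_{2d}$ deduced from the conjectured value of $h_{2d}(I^2)$, exactly as in Proposition~\ref{langesos}. One small point you might make explicit: to identify $(I^2)_{2d}$ with $\operatorname{im}(\mu)=I_d\cdot I_d$ you need $I_j=0$ for $j<d$, which follows from $s>N_{d-1}$ and genericity of $Z$; this is implicit in the paper as well.
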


\begin{proof}
We only sketch the proof of $N_{d-1}<s<N_d$. Check the claim directly
for $(n,d)=(4,2)$ or $(5,2)$ and discard these cases. Consider the
polynomial
$$P(x)\>=\>x^2-(2N_d-2n+1)x+N_d(N_d+1)-2N_{2d}.$$
By definition, $s=s_{\min}(n,d)$ is the smallest integer satisfying
$P(s)\le0$. One shows that $P(N_d-1)\le0$, thereby proving $s<N_d$.
To prove the other inequality, fix $n$ and consider $P(N_{d-1})$ as a
polynomial in $d$. One shows that $P(N_{d-1})=(d-1)Q(d)$ where $Q$ is
a polynomial with all coefficients nonnegative.
This implies $P(N_{d-1})>0$, and thus $s>N_{d-1}$. The details are
left to the reader.

By the argument in the proof of \ref{langesos} we get $p(n,2d)\ge
N_d-t$ for any number $t$ such that the map $\sfS^2I_d\to\R[\x]_{2d}$
is injective, where $I$ is the vanishing ideal of a generic set of
$t$ points in $\P^{n-1}(\R)$.
Since $s=s_{\min}(n,d)\ge N_{d-1}$, Conjecture \ref{iakaconj}
predicts that this property holds for $t=s$.
\end{proof}

\begin{rems}
\hfil
\smallskip

1.\
In Conjecture \ref{iakaconj} it is clear that $\ge$ holds, i.e.\
that $h_{2d}(I^2)$ is at least the right hand maximum (resp.\ minimum
in the exceptional cases).
For any concrete values of $n$, $d$ and $s$, the conjecture can be
verified by finding a single concrete set $Z$ with $|Z|=s$ for which
equality holds in \ref{iakaconj}. In this way the conjecture is
easily verified for sufficiently small values of $n,\,d,\,s$, using a
computer algebra system.
\smallskip

2.\
An unconditional formulation of Corollary \ref{conjpythbound}, not
depending on Conjecture \ref{iakaconj}, would be:
\emph{Let $n\ge4$ and $d\ge2$, let $N_{d-1}\le s<N_d$, and let $I$ be
the vanishing ideal of a generic set $Z\subset\P^{n-1}(\R)$ of $s$
points. If $h_{2d}(I^2)=N_{2d}-\choose{N_d-s+1}2$, then $p(n,2d)\ge
N_d-s$.}
The drawback, of course, is that we have no good control of what
numbers $s$ satisfy the condition. Theorem 4.19 in \cite{ik} shows
that $s=N_d-n$ is admitted, but this only gives the useless bound
$p(n,2d)\ge n$.
\end{rems}

\begin{rem}
For small values of $n\ge4$ and $d\ge2$ we record the bounds on
$p(n,2d)$ that we have obtained. The following table lists the
minimal number $s=s_{\min}(n,d)$ and the corresponding lower bound
$N_d-s$ for $p(n,2d)$ from Corollary \ref{conjpythbound}. We compare
these with the upper bounds from Section \ref{secreview}. (Those
upper bounds are usually the numbers $\lfloor\Lambda(n,2d)\rfloor$,
with only three exceptions where $L(n,2d)$ is better, see Remark
\ref{uppbdvgl}.)
$$\begin{array}{l|ccccccc}
\qquad d\qquad & 2 & 3 & 4 & 5 & 6 & 7 & 8 \\[2pt]
\hline
s_{\min}(4,d) & 5 & 12 & 24 & 41 & 65 & 97 & 137 \\
p(4,2d)\ge{} & 5 & 8 & 11 & 15 & 19 & 23 & 28 \\
p(4,2d)\le{} & 7 & 11 & 16 & 22 & 29 & 36 & 43 \\[3pt]
s_{\min}(5,d) & 8 & 21 & 48 & 94 & 166 & 273 & 422 \\
p(5,2d)\ge{} & 7 & 14 & 22 & 32 & 44 & 57 & 73 \\
p(5,2d)\le{} & 11 & 20 & 30 & 44 & 59 & 77 & 97 \\[3pt]
s_{\min}(6,d) & 10 & 34 & 88 & 192 & 374 & 670 & 1123 \\
p(6,2d)\ge{} & 11 & 22 & 38 & 60 & 88 & 122 & 164 \\
p(6,2d)\le{} & 15 & 29 & 50 & 77 & 110 & 152 & 201
\end{array}$$
\end{rem}

\begin{rem}
The forms in $\Sigma_{n,2d}$ of large sos length that were
constructed in Corollaries \ref{lowerboundn3} and
\ref{conjpythbound} are of very special nature, in that they have
many real zeros. Corollary \ref{typle} on typical lengths shows
that there exists a nonempty open set of nonsingular forms with the
same sos length.
\end{rem}

\begin{thm}\label{asymptbds}%
Let $n\ge4$. Assuming Conjecture \ref{iakaconj}, the Pythagoras
number $p(n,2d)$ grows asymptotically like $d^{(n-1)/2}$ for
$d\to\infty$. More precisely, for any $\epsilon>0$ the inequalities
$$(c_n-\epsilon)\,d^{(n-1)/2}\><\>p(n,2d)\><\>(C_n+\epsilon)\,
d^{(n-1)/2}$$
hold for all but finitely many $d$, where
$$c_n\>=\>\sqrt{\frac{2^n-2n}{(n-1)!}},\quad C_n\>=\>
\sqrt{\frac{2^n}{(n-1)!}}.$$
\end{thm}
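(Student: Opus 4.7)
The plan is to establish the upper and lower bounds separately, in each case by an elementary asymptotic computation. Both rely on the standard expansions
$$N_d \>=\> \frac{d^{n-1}}{(n-1)!}+O(d^{n-2}),\qquad N_{2d} \>=\> \frac{(2d)^{n-1}}{(n-1)!} + O(d^{n-2})$$
as $d\to\infty$, valid for any fixed $n$.

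For the upper bound, I would invoke the unconditional inequality $p(n,2d) \le \Lambda(n,2d) = \frac12\bigl(-1+\sqrt{1+8N_{n,2d}}\bigr)$ from Theorem~\ref{clrbounds}. Substituting the expansion of $N_{n,2d}$ gives $\Lambda(n,2d) = \sqrt{2 N_{n,2d}}\,(1+o(1)) = C_n\,d^{(n-1)/2}(1+o(1))$, since $\sqrt{2 N_{n,2d}/d^{n-1}} \to \sqrt{2^n/(n-1)!} = C_n$. Consequently $p(n,2d) < (C_n+\epsilon)d^{(n-1)/2}$ for every $\epsilon>0$ and all but finitely many $d$.

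For the lower bound I would use Corollary~\ref{conjpythbound}: assuming Conjecture~\ref{iakaconj}, $p(n,2d) \ge N_d - s_{\min}(n,d)$, and the bracketing $N_{d-1} \le s_{\min}(n,d) < N_d$ proved there ensures the conjecture is applied in the admissible range. Setting $q := N_d - s$, the defining inequality $\choose{N_d-s+1}{2} \le N_{2d}-ns$ rearranges to
$$q^2-(2n-1)q - A(d)\>\le\>0,\qquad A(d)\>:=\>2N_{2d}-2nN_d\>=\>\frac{(2^n-2n)\,d^{n-1}}{(n-1)!}+O(d^{n-2}).$$
The integer $N_d-s_{\min}(n,d)$ is the largest $q$ solving this, namely $\lfloor q^*\rfloor$, where $q^*$ is the positive root of the associated quadratic. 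A direct expansion yields
$$q^*\>=\>\frac{(2n-1)+\sqrt{(2n-1)^2+4A(d)}}{2}\>=\>\sqrt{A(d)}\,(1+o(1))\>=\>c_n\,d^{(n-1)/2}(1+o(1)).$$
Absorbing the rounding error $|\lfloor q^*\rfloor - q^*|\le 1$ into the $o(1)$ factor (valid since $d^{(n-1)/2}\to\infty$) gives $p(n,2d) \ge \lfloor q^*\rfloor > (c_n-\epsilon)d^{(n-1)/2}$ for all sufficiently large $d$.

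There is essentially no conceptual obstacle once Theorem~\ref{clrbounds} and Corollary~\ref{conjpythbound} are in hand; what remains is purely asymptotic bookkeeping. The only point requiring attention is the appearance of the correction $-2n$ inside $c_n^2 = \frac{2^n-2n}{(n-1)!}$: it traces back to the linear term $2n N_d$ in $A(d)$, which in turn comes from the $ns$ on the right-hand side of the inequality defining $s_{\min}$. Both bounds ultimately arise as positive roots of quadratics $q^2 + \text{lin}(q) - B(d) = 0$ whose discriminants differ only in lower-order terms, which is why $c_n < C_n$ but both scale as $d^{(n-1)/2}$.
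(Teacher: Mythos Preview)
Your proposal is correct and follows essentially the same route as the paper: the upper bound via $\Lambda(n,2d)\sim\sqrt{2N_{2d}}$, the lower bound via Corollary~\ref{conjpythbound} with the substitution $q=N_d-s$ (the paper writes $\theta$ for your $q$), and the same quadratic whose positive root is asymptotic to $\sqrt{2N_{2d}-2nN_d}\sim c_n d^{(n-1)/2}$. The only cosmetic difference is that the paper presents the root in the equivalent form $n-\tfrac12+\sqrt{2N_{2d}-2nN_d+n^2-n+\tfrac14}$, which is your $q^*$ after completing the square under the radical.
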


\begin{proof}
Let $\theta(n,2d):=N_d-s_{\min}(n,d)$, see \ref{conjpythbound}. For
all $d\ge2$ we have $\theta(n,2d)\le p(n,2d)\le\Lambda(n,2d)$. Since
$N_{2d}=\frac1{(n-1)!}(2d+1)\cdots(2d+n-1)$, it is clear that
$$\Lambda(n,2d)\ \sim\ \sqrt{2N_{2d}}\ \sim\ \sqrt{\frac2{(n-1)!}
(2d)^{n-1}}\>=\>C_nd^{(n-1)/2}$$
for $d\to\infty$ (meaning that the quotient of both sides converges
to~$1$).
On the other hand, $\theta=\theta(n,2d)$ is the largest
integer $0<\theta<N_d$ satisfying $\choose{\theta+1}2\le N_{2d}
-n(N_d-\theta)$. This means
$$\theta\>\approx\>n-\frac12+\sqrt{2N_{2d}-2nN_d+n^2-n+\frac14},$$
and since
$$N_{2d}-nN_d\>=\>\frac1{(n-1)!}\Bigl((2d+1)\cdots(2d+n-1)-n(d+1)
\cdots(d+n-1)\Bigr)$$
we get $\theta(n,2d)\sim c_nd^{(n-1)/2}$.
\end{proof}

\begin{rem}
A different view on the asymptotic behaviour of $p(n,2d)$ is taken in
\cite{clr} Theorem 6.4.
Fixing the degree, it is shown there that $\gamma_1\,n^d\le p(n,2d)
\le\gamma_2\,n^d$ for all $n\ge1$ and suitable $\gamma_1$,
$\gamma_2>0$ depending on $d$. This result only uses the weak lower
bound $\lambda(n,2d)$.
\end{rem}


\section{Ternary sextics and quaternary quartics}\label{sec3644}%

We now prove that the lower bounds for the Pythagoras number from
Section~\ref{sectlowerbd} are sharp, in the case of ternary sextics
or quaternary quartics:

\begin{thm}\label{p36}%
$p(3,6)=4$: Every ternary sextic that is a sum of squares is a sum of
four squares.
\end{thm}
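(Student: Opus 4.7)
The lower bound $p(3,6)\ge4$ is immediate from Corollary~\ref{lowerboundn3} with $d=3$, so the content is the matching upper bound $\ell(f)\le4$ for every $f\in\Sigma_{3,6}$. I would first dispose of the easy case: if $f$ has any real projective zero, it has even multiplicity $2m\ge2$ (since $f$ is a sum of squares), and Corollary~\ref{ternarybound} gives $\ell(f)\le d+2-m\le4$. It therefore suffices to handle forms with $f>0$ on $\P^2(\R)$.

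For strictly positive $f$, the naive ``peel off one square'' reduction is \emph{not} enough: even if one produces a cubic $p$ with $g=f-p^2\in\Sigma_{3,6}$ acquiring a real double zero, Corollary~\ref{ternarybound} only yields $\ell(g)\le d+2-1=4$, returning the useless bound $\ell(f)\le5$. I would instead aim for a length-$4$ representation directly, phrased spectrahedrally: the Gram spectrahedron
$$\mathscr{G}(f)\>=\>\bigl\{G\in\mathrm{PSD}_{10}:\mu(G)=f\bigr\},$$
where $\mu\colon\sfS^2(\R[\x]_3)\to\R[\x]_6$ is the multiplication map, contains a positive semidefinite matrix of rank at most $4$. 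The book-keeping is favourable: $\dim\sfS^2(\R[\x]_3)=\binom{11}{2}=55$ and $N_{3,6}=28$, so the affine hull of $\mathscr{G}(f)$ has dimension at most $27$, while the rank-$\le4$ locus in $\mathrm{PSD}_{10}$ has codimension $\binom{7}{2}=21$, giving expected intersection dimension $6$. On the explicit side, Proposition~\ref{langesos} already realises rank-$4$ Gram matrices on the subcone of sextics vanishing on a generic configuration of $\binom{d+1}{2}=6$ real points in $\P^2(\R)$, and Corollary~\ref{typle} together with $t(3,6)=4$ shows that $\Sigma_{3,6}(4)=\{f:\ell(f)\le4\}$ is closed with nonempty interior; the claim $p(3,6)=4$ amounts to $\Sigma_{3,6}(4)=\Sigma_{3,6}$.

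The main obstacle — and where the real work lies — is that closedness plus nonempty interior does not by itself force equality. By Corollary~\ref{typle} the only scenario to be excluded is a nonempty relatively open stratum of $\Sigma_{3,6}$ consisting of strictly positive smooth sextics with $\ell=5$. I would attack this by a semi-continuity/deformation argument in $\mathscr{G}(f)$: starting from the explicit rank-$4$ Gram matrices produced by Proposition~\ref{langesos}, deform the base form $f_0$ along a path $f_t$ connecting it to a putative $\ell=5$ target $f_1$, and show that a continuously selected rank-$4$ Gram matrix $G_t\in\mathscr{G}(f_t)$ survives to the limit — using compactness of $\mathscr{G}(f_t)$ together with upper semi-continuity of rank under limits of positive semidefinite matrices (so that the limiting $G_1\in\mathscr{G}(f_1)$ has rank $\le4$). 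Making such a continuous selection rigorous, presumably through a tangent-space analysis of the rank-$4$ stratum of $\mathscr{G}(f)$ at the distinguished extreme points supplied by the uniqueness statement in Proposition~\ref{langesos}, is the technical heart of the argument.
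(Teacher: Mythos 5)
Your proposal correctly reduces to the upper bound, correctly disposes of forms with a real zero via Corollary~\ref{ternarybound}, and correctly identifies that a na\"ive peel-off-a-square reduction is useless. The general strategy you gesture at — a connectivity/deformation argument tracking a length-$4$ (rank-$4$ Gram) representation along a path of sextics — is indeed the right shape of argument, and is essentially what the paper does. But the proposal has a genuine gap precisely at the point you yourself flag as the ``technical heart'': nothing in your sketch explains why a continuously selected rank-$4$ Gram matrix does not run into a codimension-one wall where the selection becomes impossible. Compactness of $\mathscr{G}(f_t)$ and lower semicontinuity of rank do give that the rank-$\le4$ locus is closed, but they do not give \emph{nonemptiness} along the path; the dimension count ($27-21=6$) is only an expected dimension, and on the wrong side of such a wall the intersection can perfectly well be empty. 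Your appeal to Proposition~\ref{langesos} as a starting point is also somewhat beside the point: those sextics have six real double zeros, so they lie in the easy case already handled by Corollary~\ref{ternarybound}, and the uniqueness statement there gives no leverage on the strictly positive forms you actually need to reach.

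What is missing is the algebraic input that makes the path argument close up, namely Proposition~\ref{keylem36}: if cubic forms $p_1,\dots,p_r$ are linearly independent with $\langle x_1,x_2,x_3\rangle^6\not\subset\langle p_1,\dots,p_r\rangle$ and $r\ne3$, then they have a common zero (so $\sum p_i^2$ is singular). The proof is a Gorenstein/complete-intersection argument: a linear functional $\alpha$ annihilating $U\cdot k[\x]_3$ (where $U=\mathrm{span}(p_i)$) yields an artinian Gorenstein ideal $I(\alpha)$ of socle degree $6$ containing $U$; if the $p_i$ have no common zero one can find three of them cutting out a complete intersection $J$, also Gorenstein of socle degree~$6$, and $J\subset I(\alpha)$ forces $J=I(\alpha)$, hence $\dim U\le\dim I(\alpha)_3=3$. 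With this lemma in hand, the paper's argument runs: if $p(3,6)\ge5$, the boundary $Z'=\interior(\Sigma)\cap\Sigma(4)\cap\overline{\Sigma\setminus\Sigma(4)}$ is a nonempty codimension-one set inside the critical value set of $\phi\colon(\R[\x]_3)^4\to\R[\x]_6$; since singular forms in $\interior(\Sigma)$ have codimension $\ge2$, one can pick a \emph{nonsingular} $f\in Z'$, which by the lemma must be a sum of \emph{three} squares; a tangent-hyperplane functional $\alpha$ to $Z'$ at $f$ then gives $\alpha(f)=0$, $\alpha(p^2)<0$ for some $p$, and $\ell(f+tp^2)\ge5$ for small $t>0$, contradicting $f+tp^2=q_1^2+q_2^2+q_3^2+(\sqrt{t}p)^2$. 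You will not get around needing something of this kind — the semidefinite-geometry bookkeeping alone cannot push the Pataki-type bound (which only gives rank $\le6$ here) down to $4$, and the deformation argument has to know what happens on the walls, which is exactly what Proposition~\ref{keylem36} supplies.
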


\begin{thm}\label{p44}%
$p(4,4)=5$: Every quaternary quartic that is a sum of squares is a
sum of five squares.
\end{thm}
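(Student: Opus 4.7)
Plan: The lower bound $p(4,4)\ge 5$ is inherited from Section~\ref{sectlowerbd}. Applying Corollary~\ref{conjpythbound} with $n=4$, $d=2$, $s=s_{\min}(4,2)=5$ (a direct check yields $\binom{6}{2}=15=35-20$) gives $p(4,4)\ge N_2-5=5$, conditional on Conjecture~\ref{iakaconj} for $(n,d,s)=(4,2,5)$. That single instance of the conjecture asserts that the multiplication map $\sfS^2 I(Z)_2\to\R[\x]_4$ is injective for a set $Z$ of five general real points in $\P^3$, which is easily verified by explicit computation on one concrete $Z$. Alternatively, since $\lceil\lambda(4,4)\rceil=5$, the bound already follows from Theorem~\ref{clrbounds}.

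For the harder direction $p(4,4)\le 5$, let $f\in\Sigma_{4,4}$. The plan is to show that the Gram spectrahedron
$$\mathrm{Gram}(f)\>:=\>\bigl\{G\in\sfS^2\R[\x]_2:G\succeq 0,\ \mu(G)=f\bigr\}$$
always contains a matrix of rank at most $5$, where $\mu\colon\sfS^2\R[\x]_2\to\R[\x]_4$ is the multiplication map. The ambient $\sfS^2\R[\x]_2$ has dimension $55$ and $\ker\mu$ has dimension $20$; under the plethysm $\sfS^2(\sfS^2\R^4)\cong\sfS^4\R^4\oplus\sfS^{2,2}\R^4$, this kernel is the $20$-dimensional irreducible Schur module $\sfS^{2,2}\R^4$ of algebraic curvature tensors on $\R^4$. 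A naive dimension count is encouraging: the variety of rank-$\le 5$ PSD matrices in $\sfS^{10}$ has dimension $40$, the affine slice $\mu^{-1}(f)$ has dimension $20$, and the expected intersection has dimension $40+20-55=5$. Concretely, starting from a Gram matrix $G_0\in\mathrm{Gram}(f)$ of rank at most $7$ (guaranteed by Leep's Theorem~\ref{leepboundallg}), I would try to reduce its rank iteratively by perturbations $G_0+tH$ with $H\in\ker\mu$ that are not positive semi-definite on the image of $G_0$; each such step lowers the rank by one until rank $5$ is reached.

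The main obstacle is ensuring that a rank-lowering direction $H\in\ker\mu$ always exists when the current rank is $6$ or $7$. The generic Pataki bound $\binom{r+1}{2}\le 35$ only rules out $r\ge 8$, so closing the gap to $r\le 5$ must come from additional structure of $\mathrm{Gram}(f)$. A promising avenue is a base-locus analysis in $\P^3$: if a rank-$r$ extremal Gram matrix $G_0=\sum_{i=1}^r q_i\otimes q_i$ with $r\in\{6,7\}$ exists, extremality forces $\mu$ to be injective on $\sfS^2\langle q_1,\dots,q_r\rangle$, so the $\binom{r+1}{2}$ products $q_iq_j$ are linearly independent quartics and the net or web of quadrics spanned by the $q_i$ has a rigid base scheme in $\P^3$. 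Combined with the Iarrobino-Kanev injectivity already used in the lower bound and a Cayley-Bacharach style argument for quadrics in $\P^3$, this should yield a contradiction with $f\in\Sigma_{4,4}$ attaining this rank. Making this contradiction precise is the crux of the proof.
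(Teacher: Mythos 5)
Your lower bound $p(4,4)\ge5$ is correctly handled; as you note, it already follows unconditionally from Theorem~\ref{clrbounds} since $\lceil\lambda(4,4)\rceil=5$, so the detour through Corollary~\ref{conjpythbound} and the Iarrobino--Kanev conjecture is not needed here.

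The upper bound $p(4,4)\le5$, however, is not proved. You set up the Gram spectrahedron $\mathrm{Gram}(f)$, invoke Leep's bound to start from rank $\le7$, and then try to run a rank-reduction loop along $\ker\mu$. You correctly observe that an extremal Gram matrix of rank $r$ with column space $U=\langle q_1,\dots,q_r\rangle$ forces $\sfS^2U\to\R[\x]_4$ to be injective, and that the Pataki bound $\binom{r+1}2\le35$ only excludes $r\ge8$. But the crucial step --- showing that an extremal Gram matrix of rank $6$ or $7$ cannot occur --- is left entirely open; you yourself write that making this precise ``is the crux of the proof.'' The heuristic dimension count $40+20-55=5$ does not preclude higher-rank extremal faces, and the appeal to ``a rigid base scheme'' and ``a Cayley--Bacharach style argument'' is not an argument. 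So there is a genuine gap, and it is exactly the hard part of the theorem.

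For comparison, the paper does not argue inside a single Gram spectrahedron at all. The algebraic heart is Proposition~\ref{keylem44}: if $r\ne4$ linearly independent quadrics $p_1,\dots,p_r\in k[x_1,\dots,x_4]$ generate an ideal not containing $\langle\x\rangle^4$, then they have a common zero (so $\sum p_i^2$ is singular). This is proved by Gorenstein duality: the annihilator functional $\alpha$ defines a socle-degree-$4$ Gorenstein ideal $I(\alpha)\supset U$, and if the $p_i$ have no common zero, a complete intersection of four of them is a Gorenstein ideal of the same socle degree inside $I(\alpha)$, hence equals it, forcing $\dim U\le4$. The global argument (\ref{36proof} adapted to $(4,4)$) is then topological, in the spirit of Hilbert's proof that $p(3,4)=3$: if $p(4,4)\ge6$, the set $Z'=\interior(\Sigma)\cap\Sigma(5)\cap\ol{\Sigma\setminus\Sigma(5)}$ of ``boundary'' forms is nonempty of codimension one and consists of critical values of $\phi\colon(\R[\x]_2)^5\to\R[\x]_4$; one chooses a nonsingular $f\in Z'$ (possible since singular interior forms form a codimension-two set), applies Proposition~\ref{keylem44} to conclude $f$ is a sum of $\le4$ squares, and then a tangent-hyperplane perturbation $f+tp^2$ with $\alpha(p^2)<0$ yields a form that must simultaneously have length $\ge6$ and $\le5$, a contradiction. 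Your mention of Cayley--Bacharach and Iarrobino--Kanev is in the right neighborhood of ideas --- the Gorenstein duality in Proposition~\ref{keylem44} is a Cayley--Bacharach statement --- but the rank-reduction framework and the missing extremality analysis do not add up to a proof, and the paper's critical-value argument is a structurally different (and complete) route.
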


\begin{cor}
The elements of length~$4$ in $\Sigma_{3,6}$, or those of length~$5$
in $\Sigma_{4,4}$, form an open dense subset.
\end{cor}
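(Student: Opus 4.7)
The plan is to deduce the corollary directly from Theorems \ref{p36} and \ref{p44} together with the material of Section~\ref{sectyp}. The crucial observation is that in both cases the typical complex sos length $t(n,2d)$ coincides with the Pythagoras number. From Section~\ref{sectyp}, $t(3,2d)=4$ for all $2d\ge6$ and $t(4,2d)=5$ for $2\le d\le 4$, so in particular $t(3,6)=p(3,6)=4$ and $t(4,4)=p(4,4)=5$.

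Once this is noted, Corollary \ref{typle} yields that the set of typical sos lengths is the singleton $\{p\}$ in each case, i.e.\ $T(3,6)=\{4\}$ and $T(4,4)=\{5\}$. What remains is a routine unpacking. Abbreviate $\Sigma:=\Sigma_{n,2d}$ and $p:=p(n,2d)$. Since $p$ is the maximal sos length, the set in question equals $\Sigma\setminus\Sigma_{n,2d}(p-1)$. By Proposition \ref{equidim}(a), $\Sigma_{n,2d}(p-1)$ is closed in $\R[\x]_{2d}$, so its complement is open in $\Sigma$. By Proposition \ref{equidim}(b), the local (hence global) dimension of $\Sigma_{n,2d}(p-1)$ equals $m_{p-1}<N_{n,2d}$, the strict inequality holding because $p-1<t(n,2d)$; thus $\Sigma_{n,2d}(p-1)$ has empty interior in $\R[\x]_{2d}$.

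Density of $\Sigma\setminus\Sigma_{n,2d}(p-1)$ in $\Sigma$ then follows by combining this with the density of $\interior(\Sigma)$ in $\Sigma$ (either via Proposition \ref{equidim}(c) applied to $r=p$, or directly because $\Sigma$ is convex with nonempty interior): given $f\in\Sigma$ and a neighborhood $U$ of $f$ in $\R[\x]_{2d}$, the nonempty open set $U\cap\interior(\Sigma)$ cannot be contained in the lower-dimensional semi-algebraic set $\Sigma_{n,2d}(p-1)$, so $U$ meets $\Sigma\setminus\Sigma_{n,2d}(p-1)$. There is no real obstacle in this corollary itself; all the substance lies in the computations of $t(n,2d)$ carried out in Section~\ref{sectyp} and in Theorems \ref{p36} and \ref{p44}.
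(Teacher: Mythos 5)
Your argument is correct and essentially follows the paper's approach: once $p(n,2d)$ is known, the elements of maximal length form the complement of the closed, lower-dimensional set $\Sigma_{n,2d}(p-1)$, hence an open dense subset of $\Sigma_{n,2d}$. The paper's one-line proof justifies the dimension drop via $\lceil\lambda(n,2d)\rceil=p$ rather than via $t(n,2d)=p$ and Proposition~\ref{equidim}, but in these two cases the numbers coincide; note also that your appeal to Corollary~\ref{typle} is a detour your own ``routine unpacking'' never actually uses.
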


\begin{proof}
The set of three squares in $\Sigma_{3,6}$ (resp.\ of four squares in
$\Sigma_{4,4}$) is nowhere dense since $\lambda(3,6)=4$ (resp.\
$\lambda(4,4)=5$). So the claims follow from \ref{p36} resp.\
\ref{p44}.
\end{proof}

The proofs of Theorems \ref{p36} and \ref{p44} are similar. We do the
$(3,6)$ case first, and then explain how to modify the argument in the
$(4,4)$ case. See Remark \ref{pfrem} below for comments on the proofs.

\begin{lab}\label{gorideal}%
In the following let $k$ be a field, $\ch(k)=0$. Write $A=k[\x]=
k[x_1,\dots,x_n]=\bigoplus_{e\ge0}A_e$. Given $e\ge1$ and $0\ne\alpha
\in A_e^\du=\Hom(A_e,k)$, let $I(\alpha)\subset A$ be the graded ideal
defined by $I(\alpha)_i=\{p\in A_i\colon pA_{e-i}\subset
\ker(\alpha)\}$ ($0\le i\le e$) and $I(\alpha)_i=A_i$ for $i>e$. Then
the graded artinian ring $R=A/I(\alpha)$ is Gorenstein with socle
degree~$e$. In particular, this means that $\alpha$ induces a linear
duality between $R_i$ and $R_{e-i}$ for $0\le i\le e$.

The following is the key observation in the $(3,6)$ case:
\end{lab}

\begin{prop}\label{keylem36}%
Let $p_1,\dots,p_r\in k[x_1,x_2,x_3]$ be linearly independent cubic
forms such that $\langle x_1,x_2,x_3\rangle^6\not\subset\langle
p_1,\dots,p_r\rangle$. If $r\ne3$ then $p_1,\dots,p_r$ have a common
zero (in~$\ol k$). In particular, then, the form $p_1^2+\cdots+p_r^2$
is singular.
\end{prop}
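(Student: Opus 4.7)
The plan is to split on $r$. The cases $r \le 2$ are trivial: a cubic in $k[x_1,x_2,x_3]$ always has zeros in $\P^2(\ol k)$, and two cubics in $\P^2$ meet by Bezout. The case $r = 3$ is a genuine exception, as shown by $p_i = x_i^3$ (linearly independent, no common zero, $\dim(A/\langle x_1^3,x_2^3,x_3^3\rangle)_6 = 1$, and $\sum x_i^6$ is nonsingular), so the real work is for $r \ge 4$. Writing $A = k[x_1,x_2,x_3]$, $W = \mathrm{span}(p_1,\dots,p_r) \subset A_3$, and $J = \langle p_1,\dots,p_r\rangle$, I argue contrapositively: suppose $V(W) = \emptyset$ in $\P^2(\ol k)$, and deduce $J_6 = A_6$.

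The first step extracts a complete-intersection triple from $W$. Since $V(W) = \emptyset$, evaluation at any $\xi \in \P^2$ is surjective on $W$, so $W_\xi := \{p \in W : p(\xi) = 0\}$ has codimension $1$ in $W$. A dimension count on the incidence $\{(W',\xi) \in G(3,W) \times \P^2 : W' \subset W_\xi\}$ gives $2 + 3(r-4) = 3r-10$, strictly less than $\dim G(3,W) = 3r-9$. Hence a generic $W' = \mathrm{span}(f_1,f_2,f_3) \subset W$ is base-point free, so $f_1,f_2,f_3$ form a regular sequence in $A$ and $R' := A/\langle f_1,f_2,f_3\rangle$ has Hilbert series $(1+t+t^2)^3 = 1+3t+6t^2+7t^3+6t^4+3t^5+t^6$; in particular $R'$ is Artinian Gorenstein of socle degree $6$, so multiplication induces a perfect pairing $R'_3 \times R'_3 \to R'_6 \cong k$ (as in \ref{gorideal}).

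Next, choose $p \in W$ linearly independent from $f_1,f_2,f_3$, possible since $\dim W = r \ge 4$. From $\dim R'_6 = 1$ we get $\dim \langle f_1,f_2,f_3\rangle_6 = 27$; the inclusion $\langle f_1,f_2,f_3\rangle_6 \subset J_6$ together with the hypothesis $\dim J_6 \le 27$ (since $J_6 \subsetneq A_6$) forces $J_6 = \langle f_1,f_2,f_3\rangle_6$. Hence for every $q \in A_3$ we have $pq \in J_6 = \langle f_1,f_2,f_3\rangle_6$, so the image $\bar p \in R'_3$ annihilates $R'_3$ under the perfect pairing above. Non-degeneracy forces $\bar p = 0$, i.e.\ $p \in \mathrm{span}(f_1,f_2,f_3)$, contradicting the choice of $p$. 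Thus $V(W) \ne \emptyset$, and a common zero $\xi$ of the $p_i$ makes $\sum p_i^2$ and each $\partial_j \sum p_i^2 = 2 \sum p_i \partial_j p_i$ vanish at $\xi$, so $\sum p_i^2$ has multiplicity $\ge 2$ at $\xi$ and is therefore singular. The main obstacle I see is the dimension count producing a base-point-free triple inside $W$; once that is in place, the Gorenstein duality packaged in \ref{gorideal} closes the argument immediately.
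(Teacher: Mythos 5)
Your proof is correct, and it takes a genuinely different route from the paper's on the key algebraic step. The paper introduces the apolar (Macaulay inverse system) ideal $I(\alpha)$ attached to a linear functional $\alpha\in(k[\x]_6)^\du$ vanishing on $Uk[\x]_3$, observes $U\subset I(\alpha)_3$ and $J:=\langle q_1,q_2,q_3\rangle\subset I(\alpha)$ for a common-zero-free triple $q_i\in U$, and then invokes the uniqueness of nested artinian Gorenstein ideals with the same socle degree to conclude $J=I(\alpha)$, whence $\dim U\le\dim J_3=3$. You instead avoid $I(\alpha)$ entirely: you use the hypothesis $J_6\subsetneq A_6$ directly, compute $\dim\langle f_1,f_2,f_3\rangle_6=27$ from the complete-intersection Hilbert series, force $J_6=\langle f_1,f_2,f_3\rangle_6$ by the pigeonhole $27\le\dim J_6\le27$, and then kill any extra generator $p\in W$ via the perfect pairing $R'_3\times R'_3\to R'_6$. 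This is more elementary and self-contained, at the cost of a slightly longer argument. You also supply an explicit incidence-variety dimension count ($3r-10<3r-9$ for $r\ge4$) to justify the existence of a base-point-free triple inside $W$, a step the paper asserts without proof; and you explicitly dispose of the trivial cases $r\le2$ (one or two cubics always meet in $\P^2$ by B\'ezout), which the paper leaves implicit in the phrase ``it is enough to \dots show $r\le3$.'' One small point worth spelling out: the dimension count takes place over $\ol k$, so one should note that the complement of the bad locus in the Grassmannian is open dense and $k$-rational points are Zariski-dense (as $\ch(k)=0$, hence $k$ infinite), to guarantee that $f_1,f_2,f_3$ can be chosen in $k[\x]_3$.
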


\begin{proof}
Write $\x=(x_1,x_2,x_3)$, let $U\subset k[\x]_3$ be the linear span
of $p_1,\dots,p_r$. By assumption there exists a linear functional
$0\ne\alpha\in(k[\x]_6)^\du$
with $Uk[\x]_3\subset\ker(\alpha)$. Let $I=I(\alpha)\subset k[\x]$ be
the Gorenstein ideal defined by $\alpha$. Then $U\subset I_3$. It is
enough to assume that $p_1,\dots,p_r$ have no common zero and to show
$r\le3$.
By the assumption there exist three forms $q_1,\,q_2,\,q_3\in U$
without common zero. Hence the ideal $J:=\langle q_1,q_2,q_3\rangle$
is a complete intersection, so $J$ is an artinian Gorenstein ideal
with socle degree~$6$ (see \cite{egh} Theorem CB8).
By construction we have $J\subset I$. On the other hand, both $I$ and
$J$ are Gorenstein ideals with same socle degree. Therefore $J=I$,
and in particular, $I_3=J_3$ has dimension~$3$, whence $r\le3$.
\end{proof}

\begin{lab}\label{36proof}%
We now give the proof of Theorem \ref{p36}. Let $\Sigma=
\Sigma_{3,6}$, and let $\Sigma(4)\subset\Sigma$ be the set of sums of
four squares. The map
$$\phi\colon(\R[\x]_3)^4\to\R[\x]_6,\quad\phi(p_1,p_2,p_3,p_4)=
\sum_{i=1}^4p_i^2$$
is proper, and its image set $\Sigma(4)$ has nonempty interior in
$\R[\x]_6$. Let $Z\subset\Sigma(4)$ be the set of critical values of
$\phi$. So $Z$ consists of all forms $\phi(p)$ where
$p=(p_1,p_2,p_3,p_4)\in(\R[\x]_3)^4$ is such that $\langle p_1,p_2,
p_3,p_4\rangle_6\ne\R[\x]_6$.
The set $Z$ is closed and semi-algebraic in
$\R[\x]_6$ and has empty interior, e.g.\ by Sard's theorem.
Whenever $[0,1]\to\R[\x]_6$, $t\mapsto f_t$ is a smooth path in
$\R[\x]_6$ with $f_t\in\interior(\Sigma)$ and $f_t\notin Z$ for all
$0\le t\le1$, then $f_0\in\Sigma(4)$ implies $f_1\in\Sigma(4)$, since
one can lift the path $t\mapsto f_t$ to a path in $(\R[\x]_3)^4$.

Since $\lambda(3,6)=4$, it is clear that $p(3,6)\ge4$. We assume
$p(3,6)\ge5$ and will arrive at a contradiction. By assumption the
semi-algebraic set $Z':=\interior(\Sigma)\cap\Sigma(4)\cap
\ol{(\Sigma\setminus\Sigma(4))}$ is non-empty. It is contained in
$Z$ and has codimension one in $\R[\x]_6$, by the above path argument
and since the complement of a codimension two set in
$\interior(\Sigma)$ is connected.
So there exists $f\in Z'$, together with an open neighborhood $W$ of
$f$ contained in $\Sigma$, such that $Z'\cap W$ is a smooth
codimension one submanifold of $W$. Since every singular form in
$\interior(\Sigma)$ has at least two distinct singularities (complex
conjugate), the set $\{f\in\interior(\Sigma)\colon f$~is singular$\}$
has codimension two. Therefore we can choose $f$ to be nonsingular.
Then it follows from Proposition \ref{keylem36} that $f$ is a sum of
three squares.

Let $0\ne\alpha\in(\R[\x]_6)^\du$ such that $\{g\in\R[\x]_6\colon
\alpha(g-f)=0\}$ is the affine hyperplane in $\R[\x]_6$ tangent to
$Z'$ at $f$. If we shrink $W$ appropriately, the hypersurface $Z'$
divides $W$ into two open halves, one contained in $\Sigma(4)$, the
other contained in $\Sigma\setminus\Sigma(4)$. Replacing $\alpha$
with $-\alpha$ if necessary, we can thus assume: If $h\in\R[\x]_6$
satisfies $\alpha(h)>0$, then $\ell(f+th)\le4$ and $\ell(f-th)\ge5$,
for all sufficiently small $t>0$.
It follows that $\alpha(f)=0$.
Since $f\in\interior(\Sigma)$, there exists $p\in\R[\x]_3$ with
$\alpha(p^2)<0$. By the choice of $\alpha$ we have $\ell(f+tp^2)\ge5$
for small $t>0$. But this is a contradiction since $\ell(f)\le3$.
Theorem \ref{p36} is proved.
\end{lab}

In the $(4,4)$ case, the analogue to Proposition \ref{keylem36} is

\begin{prop}\label{keylem44}%
Let $p_1,\dots,p_r\in k[x_1,x_2,x_3,x_4]$ be linearly independent
quadratic forms such that $\langle x_1,x_2,x_3,x_4\rangle^4
\not\subset\langle p_1,\dots,p_r\rangle$.
If $r\ne4$ then $p_1,\dots,p_r$ have a common zero. In
particular, then, the form $p_1^2+\cdots+p_r^2$ is singular.
\end{prop}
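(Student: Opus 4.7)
The plan is to follow the proof of Proposition \ref{keylem36} almost verbatim, with degrees shifted: ``$3$ cubics in $3$ variables with socle degree $6$'' is replaced by ``$4$ quadrics in $4$ variables with socle degree $4(2-1)=4$''. The number $r=4$ is forced precisely because an artinian Gorenstein complete intersection of $4$ quadrics in $k[x_1,\dots,x_4]$ has socle degree exactly $4$, matching the socle degree produced on $k[\x]_4$ by the functional $\alpha$.

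First I would set $U:=\text{span}(p_1,\dots,p_r)\subset k[\x]_2$. The hypothesis $\langle\x\rangle^4\not\subset\langle p_1,\dots,p_r\rangle$ yields a nonzero functional $\alpha\in(k[\x]_4)^\du$ vanishing on $U\cdot k[\x]_2$. By \ref{gorideal}, the ideal $I:=I(\alpha)$ is graded Gorenstein with $k[\x]/I$ of socle degree $4$, and $U\subset I_2$. It suffices to assume $p_1,\dots,p_r$ have no common zero in $\ol k$ and to derive $r=4$.

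The range $r\le 3$ is vacuous: three quadric hypersurfaces in $\P^3$ always meet, by the projective dimension theorem. For the substantive case $r\ge 5$, after extending scalars to $\ol k$ (which preserves linear independence and $\alpha$), a generic four-tuple $q_1,\dots,q_4\in U$ still has no common zero, so it is a regular sequence of length $4=n$, and $J:=\langle q_1,\dots,q_4\rangle$ is an artinian Gorenstein complete intersection of socle degree $4$ (\cite{egh} Theorem CB8). Since $q_i\in U\subset I_2$, we get $J\subset I$. Both $k[\x]/J$ and $k[\x]/I$ are then graded artinian Gorenstein of socle degree $4$: if $I\ne J$, the nonzero ideal $I/J$ in the artinian Gorenstein ring $k[\x]/J$ must contain its $1$-dimensional socle (living in degree $4$), forcing $I_4=k[\x]_4$; but $I_4=\ker(\alpha)$ is proper, contradiction. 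Hence $I=J$, so $r=\dim U\le\dim I_2=\dim J_2=4$, contradicting $r\ge 5$. The singularity of $p_1^2+\cdots+p_r^2$ at a common zero is immediate since each squared term vanishes there to order $\ge 2$.

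I expect no serious obstacle: everything is a direct transposition of \ref{keylem36}. The only steps needing a moment of care are the socle-contains-every-nonzero-ideal argument for the Gorenstein comparison $J=I$, and the mild genericity argument extracting four elements of $U$ without a common zero in $\ol k$ — both routine given the char zero (hence infinite field) assumption.
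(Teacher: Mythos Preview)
Your proof is correct and follows essentially the same approach as the paper, which simply declares the argument ``completely parallel'' to that of Proposition~\ref{keylem36}. You supply more detail than the paper does---explicitly disposing of $r\le3$ via the projective dimension theorem and spelling out the socle argument for $J=I$---and your base extension to $\ol k$ is harmless though unnecessary, since $k$ (having characteristic zero) is already infinite.
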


The proof is completely parallel to the proof of \ref{keylem36}. The
essential point is that four quadratic forms in $k[\x]=k[x_1,\dots,
x_4]$ without common zero
define a complete intersection with socle degree~$4$. Having proved
\ref{keylem44}, one considers the sum of squares map $\phi\colon
(\R[\x]_2)^5\to\R[\x]_4$ and proceeds similarly to \ref{36proof},
thereby proving Theorem \ref{p44}.

\begin{rem}\label{pfrem}%
The proofs of Theorems \ref{p36} and \ref{p44} have much in common
with Hilbert's argument for showing that every nonnegative ternary
quartic is a sum of three squares \cite{hi}. Hilbert considered the
sum of squares map $\phi\colon(\R[\x]_2)^3\to\R[\x]_4$
($\x=(x_1,x_2,x_3)$) and showed that the critical values of $\phi$
are singular forms. Hence the strictly positive ones among them form
a codimension two subset of $\R[\x]_4$. This implies that the set of
strictly positive forms that are not critical values of $\phi$ is
connected, leading to the desired conclusion.

To prove \ref{p36}, say, we have shown that the critical values of
$(\R[\x]_3)^4\to\R[\x]_6$ are singular or else sums of three squares.
We then needed an extra argument to deal with the second case.
\end{rem}

\begin{rem}
Blekherman proved that every positive definite form in the boundary
$\partial(\Sigma_{3,6})$ (resp.\ in $\partial(\Sigma_{4,4})$) is a
sum of three (resp.\ four) squares (\cite{bl}, Corollaries 1.3 and
1.4). Propositions \ref{keylem36} and \ref{keylem44} give a quick way
of reproving these results.
We explain this for $(n,d)=(4,4)$. Let $B$ be the set of positive
definite forms in $\partial(\Sigma_{4,4})$. Every singular form in
$B$ has at least two different singularities (complex conjugate), so
these forms constitute a subset of codimension $\ge2$ in $\R[\x]_4$.
Since the semi-algebraic set $B$ has codimension one in $\R[\x]_4$
locally at each of its points, every form in $B$ is a limit of
nonsingular forms in $B$. Every nonsingular form in $B$ is a sum of
four squares by \ref{keylem44}. Since the sums of four squares form a
closed set, it follows that $B$ consists of sums of four squares.
\end{rem}

\begin{rem}
Unfortunately, Propositions \ref{keylem36} and \ref{keylem44} do not
directly extend to higher degrees. Still we conjecture in the ternary
case that the lower bound \ref{lowerboundn3} is sharp, i.e.\ that
$p(3,2d)=d+1$ holds for all $d\ge2$.
\end{rem}



\begin{thebibliography}{HHH}

\bibitem{an}
D.\,J.~Anick:
Thin algebras of embedding dimension three.
J.~Algebra \textbf{100}, 235--259 (1986).

\bibitem{blop}
R.~Baeza, D.~Leep, M.~O'Ryan, J.P.~Prieto:
Sums of squares of linear forms.
Math.~Z. \textbf{193}, 297--306 (1986).

\bibitem{bl}
G.~Blekherman:
Nonnegative polynomials and sums of squares.
J.~Am.\ Math.\ Soc.\ \textbf{25}, 617--635 (2012).

\bibitem{cdlr}
M.\,D. Choi, Z.\,D. Dai, T.\,Y. Lam, B. Reznick:
The Pythagoras number of some affine algebras and local algebras.
J.~reine angew.\ Math.\ \textbf{336}, 45--82 (1982).

\bibitem{clr}
M.D.~Choi, T.Y.~Lam, B.~Reznick:
Sums of squares of real polynomials.
In: \emph{$K$-Theory and Algebraic Geometry: Connections with
Quadratic Forms and Division Algebras}, B.~Jacob, A.~Rosenberg (eds),
Proc.\ Symp.\ Pure Math.\ \textbf{58.2}, Am.\ Math.\ Soc., 1995,
pp.~103--126.

\bibitem{egh}
D.~Eisenbud, M. Green, J. Harris:
Cayley-Bacharach theorems and conjectures.
Bull.\ Am.\ Math.\ Soc.\ \textbf{33}, 295--324 (1996).

\bibitem{fb}
R.~Fr\"oberg:
An inequality for Hilbert series of graded algebras.
Math.\ Scand.\ \textbf{56}, 117-144 (1985).

\bibitem{fos}
R.~Fr\"oberg, G.~Ottaviani, B.~Shapiro:
On the Waring problem for polynomial rings.
Proc.\ Nat.\ Acad.\ Sci.\ \textbf{109}, 5600-5602 (2012).

\bibitem{hi}
D. Hilbert:
\"Uber die Darstellung definiter Formen als Summe von
Formenquadraten. Math.\ Ann.\ \textbf{32}, 342--350 (1888).

\bibitem{ik}
A. Iarrobino, V. Kanev:
\emph{Power Sums, Gorenstein Algebras, and Degeneration Loci}.
Lect.\ Notes Math.\ \textbf{1721}, Springer, Berlin, 1999.

\bibitem{lp}
D. Leep:
Sums of squares of polynomials and the invariant $g_n(R)$.
Preprint, 2006.

\bibitem{ls}
D. Leep, C. Starr:
Estimates of the Pythagoras number of $\R_m[x_1,\dots,x_n]$ through
lattice points and polytopes.
Discrete Math.\ \textbf{308}, 5771--5781 (2008).

\bibitem{pf}
A.~Pfister:
Zur Darstellung definiter Funktionen als Summe von Quadraten.
Invent.\ math.\ \textbf{4}, 229--237 (1967).

\bibitem{s}
W.~Scharlau:
Quadratic and Hermitian Forms.
Grundl.\ math.\ Wiss.\ \textbf{270}, Springer, Berlin, 1985.

\end{thebibliography}
\end{document}